\newtheorem {prop}{Proposition} [section] 
\newtheorem {thm}[prop]{Theorem}
\newtheorem {cor}[prop]{Corollary}
\newtheorem{lem}[prop]{Lemma}
\theoremstyle{definition}
 \newtheorem {rk}[prop]{Remark}
\newtheorem {df}[prop]{Definition}
\newtheorem {dfs}[prop]{Definitions}
\newtheorem {ex}[prop]{Example}
\newtheorem {notation}[prop]{Notation}
\newtheorem {conv}[prop]{Convention}
\newcommand{\R} {\mathbb{R}}
\newcommand{\N} {\mathbb{N}}
\newcommand{\C} {\mathcal{C}}
\newcommand{\eps}{\varepsilon}
\newcommand{\U}{\mathcal{U}}
\newcommand{\dbar}{\overline d}
\newcommand{\pa}{\partial}
\newcommand{\Vol}{\text{Vol}}
\author{Leonid Shartser}
\address
{Department of Mathematics, University of Toronto, 40 St. George
st, Toronto, ON, Canada M5S 2E4 }
\email{shartl@math.toronto.edu}
\keywords{}
\thanks{}
\subjclass[2000]{58A10, 58A12, 26D10, 55N20}
\dedicatory{To the third anniversary of P. Milman's working seminar\\
 for his graduate students and postdocs}
\begin{document}
\title[Poincar\'e Inequality for differential forms on manifolds]{Note on explicit proof of Poincar\'e Inequality for differential forms on manifolds}
\maketitle
\begin{abstract}
We prove a Poincar\'e type inequality for differential forms on compact manifolds
by means of a constructive 'globalization' of a local Poincar\'e inequality on convex sets. 
\end{abstract}

\section{ Introduction }
In a recent paper V. Goldshtein and M. Troyanov [GoTr] proved Sobolev-Poincar\'e type inequality for differential forms on compact Riemannian manifolds. 
In this article we present
a constructive alternative method of proof. The latter allows, in particular,
to estimate the constants in the inequalities in geometric terms.
Namely, we construct for any smooth $r$-form $\omega$ on a Riemannian manifold 
$M$ a smooth $r$-form $\xi$ on $M$ such that $d \omega = d\xi$ and inequality
\begin{equation} \label{eq1}
\| \xi \|_{L^p(M)} \leq C \| d\omega \|_{L^q(M)}
\end{equation}
holds for $p$ and $q$ in a certain (standard) range with a positive
constant $C$ depending only on $p$, $q$, $r$ and manifold the $M$ 
(Theorems \ref{loc_Poincare_ineq} and \ref{global_Poincare}).
The structure of the proof is first to show inequality (\ref{eq1})
locally by means of adapting a proof of Lemma 3.11 from [BoMi] to our
setting with differential forms, and then, globalizing it by means of
a novel method that we present in Section \ref{globalization}. 

We are mainly interested in Poincar\'e type inequalities due to the geometric information
that they encode. Our primary goal is to study such inequalities on singular sets of algebraic 
nature, such as semialgebraic sets, in order to better understand the metric behavior 
of such sets. Constructive proofs of such inequalities 
would, hopefully, allow to extend results of this type to a singular setting.
The results of this article were announced in [S].

\medskip

Throughout this chapter we will use the following notations.
\begin{notation}
${\ }$\\
\begin{itemize}
\item
Suppose that $X$ is a set and $f,g: X\to \R$ are two functions. We write $f\lesssim g$ if
there exists a positive constant $C$ such that $f\leq C g$.
\item
The symbol $\N$ will denote the set of natural numbers $\{1,2,\dots\}$.
\item If $A$ is a measurable subset of $\R^n$ we write $\text{Vol}(A)$ to denote its 
$n$-dimensional volume.
\item
If $p>1$ denote by $p'$  its H\"older conjugate, that is, $1/p+1/p'=1$.
\item
If $x,y\in\R^n$, we write $d(x,y):=|x-y|$.
\end{itemize}
\end{notation}

\vspace*{4mm}
\noindent \textbf{Acknowledgment. }
We would like to thank P. Milman and A. Nabutovsky for helpful discussions.
\vspace*{2mm}

\section{Local Poincar\'e inequality}
In this section we prove a local Poincar\'e type inequality for differential forms.
That is, we prove inequality (\ref{eq1}) with $M$ being a convex set.
This inequality is well known and was studied, e.g. , in [IwLu].
Our proof of local inequality (\ref{eq1}) utilizes a slightly different approach from the one used in [IwLu].
We show that Poincar\'{e} inequality for differential forms (Theorem \ref{loc_Poincare_ineq}) is a simple consequence of 'universal' inequality (Proposition \ref{lem311}) that extends Lemma 3.11 from [BoMi] to differential forms.


Suppose that $M$ is an orientable Riemannian manifold. 
We denote by $\Omega^\bullet(M)$ the algebra of smooth differentiable forms on $M$.
Define an $L^p$ norm of a form $\omega\in\Omega^r(M)$ by $\| \omega\|_{L^p}:= \left(\int_M |\omega|^p d\text{Vol}\right)^{1/p}$ where $|\omega|$ denotes the pointwise norm of $\omega$ and $d\text{Vol}$ denotes the volume form on $M$.

\subsection{Poincar\'e inequality on a convex set in $\R^n$}
Let $D\subset\R^n$ be a convex set. For each $y\in D$ define a homotopy operator 
$$ K_y: \Omega^r(D)\to\Omega^{r-1}(D) $$
by the following formula.
$$ K_y\omega := \int_0^1 \psi_y^*\omega dt ,$$
where $ \psi_y :D\times [0,1] \to D $, $\psi_y(x,t):=tx+(1-t)y$.
It is easy to check that $dK_y\omega+ K_yd\omega=\omega$.


The next proposition is an extension of Lemma 3.11 from [BoMi] 
to differential forms with nearly the same proof, i.e. by interchanging the order of 
integrations on the left hand side of the inequality.

\begin{prop}\label{lem311}
Let $D$ be a convex compact set in $\R^n$, $r\in \N\cup\{0\}$ and $p,q\geq 1$ such that \\
$\text{\ \ \ \ \ \ \ \ \ \ \ \ \ \ \ \ \ \ \ \ }(i)\ \ p\geq q \text{ and } \  \frac{1}{q}-\frac{1}{p}<\frac{1}{n}$ \\
or\\
$\text{\ \ \ \ \ \ \ \ \ \ \ \ \ \ \ \ \ \ \ \ }(ii)\ \  p<q $.

Then, 
$$\left\|\frac{1}{\Vol (D)^{1/p}} \left\| \frac{K_y d\omega(x)}{d(x,y)} \right\|_{L^q(dy)} \right\|_{L^p(dx)} \leq C(p,q,r,n) \left\| d \omega\right\|_{L^q(dx)}$$
for any $r$-form $\omega$, where 
$$
C(p,q,r,n):= 
\left\{ \begin{array}{rl}
 \int_0^1 \min (t^{n/p},(1-t)^{n/p})  t^{r-n/p}(1-t)^{-n/q}dt  &\mbox{ in case  $(i)$} \\
 \int_0^1  \min(t^{n/q},(1-t)^{n/q})t^{r-n/q}(1-t)^{-n/q} dt &\mbox{ in case $(ii)$}
       \end{array} \right.
$$

\end{prop}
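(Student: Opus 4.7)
The plan is to first extract a pointwise bound for $|K_y d\omega(x)|/d(x,y)$ in terms of $f:=|d\omega|$, and then to estimate the resulting mixed norm by Minkowski's integral inequality followed by a change of variables. Unpacking the pullback, the differential $d\psi_y(v,\tau)=tv+\tau(x-y)$ shows that the $dt$-coefficient of $\psi_y^*(d\omega)$ evaluated on spatial vectors $w_1,\dots,w_r$ equals $t^r(d\omega)_{tx+(1-t)y}(x-y,w_1,\dots,w_r)$. Combined with the pointwise bound $|\eta(v_1,\dots,v_{r+1})|\le|\eta|\prod|v_j|$, this gives
$$
\frac{|K_y d\omega(x)|}{d(x,y)} \le T(x,y) := \int_0^1 t^r\, f(tx+(1-t)y)\,dt,
$$
so it suffices to prove $\|T\|_{L^p_x L^q_y}\le C(p,q,r,n)\,\Vol(D)^{1/p}\,\|f\|_{L^q}$.

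To do so, I would apply Minkowski's integral inequality in the norm $L^p_x L^q_y$ to pull the $t$-integral outside: $\|T\|_{L^p_x L^q_y}\le \int_0^1 t^r\,\|U_t f\|_{L^p_x L^q_y}\,dt$, where $U_t f(x,y):=f(tx+(1-t)y)$. For the inner $L^q_y$-norm with $x$ fixed, the substitution $z=tx+(1-t)y$ gives $dy=(1-t)^{-n}\,dz$ and $z$ ranges over $A_x:=tx+(1-t)D\subset D$ by convexity; hence $\|U_t f(x,\cdot)\|_{L^q_y}^q = (1-t)^{-n}\,g(x)$ with $g(x):=\int_{A_x} f(z)^q\, dz$. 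The problem reduces to bounding $\int_D g(x)^{p/q}\, dx$.

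The two cases of the proposition appear at this step. In case (i), $p\ge q$, so one writes $g^{p/q}\le \|f\|_{L^q}^{p-q}\,g$ (since $g\le \|f\|_{L^q}^q$ and $p/q-1\ge 0$), and Fubini gives $\int_D g(x)\,dx=\int_D f(z)^q\,|\{x\in D:z\in A_x\}|\,dz$; the inner set lies both in $D$ and in the dilate $(z-(1-t)D)/t$, hence has measure at most $\min(1,((1-t)/t)^n)\Vol(D)$. Assembling yields $\|U_t f\|_{L^p_x L^q_y}\le (1-t)^{-n/q}\min(1,(1-t)/t)^{n/p}\Vol(D)^{1/p}\|f\|_{L^q}$, and splitting the $t$-integral at $t=1/2$ reproduces exactly the stated $C(p,q,r,n)$; integrability at $t=1$ is precisely what forces $1/q-1/p<1/n$. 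In case (ii) $p<q$ the Hölder step above fails (the exponent $p/q-1$ is negative), so I would instead use Jensen (concavity of $s\mapsto s^{p/q}$) in the form $\int_D g^{p/q}\,dx\le \Vol(D)^{1-p/q}\bigl(\int_D g\bigr)^{p/q}$; the same Fubini estimate on $\int_D g$ produces the case (ii) constant with $n/p$ replaced by $n/q$ in the min-exponent, and no further restriction on $(p,q)$ is needed because the resulting $t$-integral converges automatically. The main technical point is the coordinated use of the two changes of variable — one via $z=tx+(1-t)y$ inside, and one via Fubini to exploit that $|\{x:z\in A_x\}|$ is small when $t$ is close to $1$ — together with the convexity-based inclusion $A_x\subset D$; the dichotomy between cases (i) and (ii) reflects nothing more than whether $s\mapsto s^{p/q}$ is convex or concave.
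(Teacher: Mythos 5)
Your proposal is correct and follows essentially the same route as the paper: the same pointwise bound $|\psi_y^*d\omega|_1\le t^r\,|x-y|\,|d\omega(\psi_y(x,t))|$, Minkowski's inequality to extract the $t$-integral, the change of variables $z=tx+(1-t)y$ with the convexity inclusion $tx+(1-t)D\subset D$, the volume estimate $\min(1,((1-t)/t)^n)\Vol(D)$ for the set of admissible $x$, and the same convex/concave dichotomy in the exponent $p/q$. The only cosmetic difference is that in case (i) you replace the paper's Minkowski integral inequality in $L^{p/q}$ by the bound $g^{p/q}\le\|f\|_{L^q}^{p-q}g$ followed by Fubini, which produces the identical constant $C(p,q,r,n)$.
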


\begin{proof}
For a $k$-form $\alpha$ the pullback $\psi_y^*\alpha(x,t)$ can be written in the form 
$$ \alpha_0+dt\wedge\alpha_1 .$$
Denote by $|\psi_y^*\alpha(x,t)|_{1}$ the pointwise norm $|\alpha_1(x,t)|$.
\\
Suppose that $p\geq q$.
\begin{eqnarray*}
\left\| \left\| \frac{K_y d\omega(x)}{|x-y|} \right\|_{L^q(dy)} \right\|_{L^p(dx)}
&=&
\left\{ \int_D \left\| \frac{K_y d\omega(x)}{|x-y|} \right\|_{L^q(dy)}^p dx\right\}^{1/p}
\\&=&
\left\{ \int_D \left( 
\int_D \left| \int_0^1\frac{\psi_y^*d\omega(x,t)}{|x-y|}dt\right|^q dy
\right) ^{p/q} dx\right\}^{1/p}
\\&\leq&
\left\{ \int_D \left( 
\int_D \left[ \int_0^1\frac{\left|\psi_y^*d\omega(x,t)\right|_1}{|x-y|}dt \right]^q dy
\right) ^{p/q} dx\right\}^{1/p}
\\&\leq&
\left\{ \int_D \left( 
\int_0^1 \left[ \int_D  \frac{\left|\psi_y^*d\omega(x,t)\right|_1^q}{|x-y|^q}dy \right]^{1/q}dt
\right) ^{p} dx\right\}^{1/p}
\\&\leq&
\int_0^1 \left\{ \int_D 
\left[ \int_D  \frac{\left|\psi_y^*d\omega(x,t)\right|_1^q}{|x-y|^q} dy \right]^{p/q}
dx\right\}^{1/p}dt.
\end{eqnarray*}

Observe that if $d\omega_1(x,t)$ is the component of $\psi_y^*d\omega(x,t)$ that contains $dt$,
then for a collection of vectors $\xi_1,\dots,\xi_k$ we have 
$$ d\omega_1(x,t)(\xi_1,\dots,\xi_k)=t^k d\omega(\psi_y(x,t); x-y,\xi_1,\dots,\xi_k). $$
It follows that 
$$ \left|\psi_y^*d\omega(x,t)\right|_1 \leq t^k|x-y|\ |d\omega(\psi_y(x,t))| .$$
Therefore,
\begin{eqnarray}\label{l_312}
\int_0^1 \left\{ \int_D 
\left[ \int_D  \frac{\left|\psi_y^*d\omega(x,t)\right|_1^q}{|x-y|^q} dy \right]^{p/q}
dx\right\}^{1/p}dt
&\leq&\nonumber \\
\int_0^1 \left\{ \int_D 
\left[ \int_D  t^{kq}\left|d\omega(\psi_y(x,t))\right|^q dy \right]^{p/q}
dx\right\}^{1/p}dt &=&
\end{eqnarray}
Set $u=tx$ and $v=(1-t)y$ obtaining $du=t^n dx$ and $dv=(1-t)^n dy$
\begin{eqnarray*}
\int_0^1 \left\{ \int_{tD} 
\left[ \int_{(1-t)D}  \left|d\omega(u+v)\right|^q dv \right]^{p/q}
du \right\}^{1/p}t^{k-n/p}(1-t)^{-n/q}dt &=&
\end{eqnarray*}
set $z=u+v$,
\begin{eqnarray*}
\int_0^1 \left\{ \int_{tD} 
\left[ \int_{u+(1-t)D}  \left|d\omega(z)\right|^q dz \right]^{p/q}
du \right\}^{1/p}t^{k-n/p}(1-t)^{-n/q}dt.
\end{eqnarray*}
Note that $D$ is convex we have $u+(1-t)D\subset D$. Let us examine the expression in $\{\  \}$.
\begin{eqnarray*}
\left\{ \int_{tD} 
\left[ \int_{u+(1-t)D}  \left|d\omega(z)\right|^q \phi(y(z-u))dz \right]^{p/q}
du \right\}^{1/p} 
&=&\\
\left\{ \int_{tD} 
\left[ \int_{D} {\bf 1}_{u+(1-t)D}(z)  \left|d\omega(z)\right|^q dz \right]^{p/q}
du \right\}^{1/p} 
&=&\\
\left\| 
 \int_{D} {\bf 1}_{u+(1-t)D}(z)  \left|d\omega(z)\right|^q 
dz \right \|_{L^{p/q}(tD,du)} ^{1/q} 
&\leq&\\
\left( \int_{D} \left\| 
  {\bf 1}_{u+(1-t)D}(z)  \left|d\omega(z)\right|^q 
 \right \|_{L^{p/q}(tD,du)}dz  \right)^{1/q} 
&\leq&\\
\left( \int_{D} \left|d\omega(z)\right|^q  \left\| 
  {\bf 1}_{u+(1-t)D}(z)  
 \right \|_{L^{p/q}(tD,du)}dz  \right)^{1/q}. 
\end{eqnarray*}
Now, 
consider the following estimate.
\begin{eqnarray*}
\left\| 
  {\bf 1}_{u+(1-t)D}(z)  
 \right \|_{L^{p/q}(tD,du)}
&\leq&
\left\| 
 {\bf 1}_{u+(1-t)D}(z) \right \|_{L^{p/q}(tD,du)}
\\&\leq&
\left( \text{Vol}(D) \min (t^n,(1-t)^n) \right)^{q/p}.
\end{eqnarray*}
Summarizing all the computations we finally obtain
\begin{eqnarray*}
&\ &\left\| \left\| \frac{K_y d\omega(x)}{|x-y|} \right\|_{L^q(dy)}\right\|_{L^p(dx)}
\leq
\\&\leq& 
\int_0^1 \left( \int_{D} \left|d\omega(z)\right|^q  
\left( \text{Vol}(D) \min (t^n,(1-t)^n) \right)^{q/p}dz
\right)^{1/q}  t^{k-n/p}(1-t)^{-n/q}dt
\\&\leq&
(\text{Vol}(D))^{1/p}
\int_0^1 
\|d\omega\|_{L^q(D)}
\min (t^{n/p},(1-t)^{n/p})  t^{k-n/p}(1-t)^{-n/q}dt 
\\&\leq&
(\text{Vol}(D))^{1/p}C(p,q,k,n)\|d\omega\|_{L^q(D)}.
\end{eqnarray*}

Now suppose that $p<q$. Up to equation (\ref{l_312}) everything is the same.
Let
$$I:= \left\{ \int_D 
\left[ \int_D  \left|d\omega(\psi_y(x,t))\right|^q dy \right]^{p/q}
dx\right\}^{1/p} $$
By H\"{o}lder inequality with exponent $r=q/p$ we have:
\begin{eqnarray*}
I&\leq& 
\left\{ \left( \int_D 1 dx \right)^{1/r'} \left( \int_D 
\left[ \int_D  \left|d\omega(\psi_y(x,t))\right|^q dy \right]^{rp/q}
dx\right)^{1/r}\right\}^{1/p} 
\\&\leq&
\Vol(D)^{\frac{1}{p}-\frac{1}{q}}
\left( \int_D 
\left[ \int_D  \left|d\omega(\psi_y(x,t))\right|^q dy \right]
dx\right)^{1/q} 
=
\end{eqnarray*}
Set $u=tx$, $v=(1-t)y$ and $z=u+v$ obtaining $du=t^n dx$, $dv=(1-t)^n dy$ and $dz=dv$
\begin{eqnarray*}
&=& \Vol(D)^{\frac{1}{p}-\frac{1}{q}}
t^{-n/q}(1-t)^{-n/q}\left( \int_{tD}  \int_{u+(1-t)D}  
\left|d\omega(z)\right|^q dz du \right)^{1/q}
\\&=&
\Vol(D)^{\frac{1}{p}-\frac{1}{q}}
t^{-n/q}(1-t)^{-n/q}\left( \int_{tD}  \int_{D}  
\left|d\omega(z)\right|^q {\bf 1}_{u+(1-t)D}(z) dz du \right)^{1/q}
\\&=&
\Vol(D)^{\frac{1}{p}-\frac{1}{q}}
t^{-n/q}(1-t)^{-n/q}\left( \int_{D}  \left|d\omega(z)\right|^q \int_{tD}  
 {\bf 1}_{u+(1-t)D}(z) du dz \right)^{1/q}
\\&\leq&
\text{Vol}(D)^{1/p}t^{-n/q}(1-t)^{-n/q} \min(t^{n/q},(1-t)^{n/q})\|d\omega\|_{L^q(D)} .
\end{eqnarray*}
The inequality of the lemma follows from here.
\end{proof}

Next, we prove the local Poincar\'e inequality.
\begin{thm}\label{loc_Poincare_ineq}
Suppose that $p,q\geq 1$ are as in Proposition \ref{lem311}. 
Let $\omega$ be a smooth $r$-form on a convex set $D\subset\R^n$. 
There exists an $r$-form $\xi$ on $D$ such that 
$d\omega=d\xi$ and 
$$\left\| \xi \right\|_{L^p(D)} \leq c \left\| d\omega \right\|_{L^q(D)},$$ 
where $c:={\Vol (D)^{1/p-1/q}}C(p,q,k,n) R$, 
with $C(p,q,k,n)$ from Proposition \ref{lem311}.

\end{thm}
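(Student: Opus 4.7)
The natural plan is to average the homotopy operator $K_y$ over $y\in D$ to produce a single primitive, and then control that average by Proposition \ref{lem311}. Concretely, I would define
$$\xi(x) \ := \ \frac{1}{\Vol(D)} \int_D K_y\, d\omega(x)\, dy,$$
(interpreting the integral componentwise on coefficients). Since $d$ acts only in the $x$-variable, it commutes with integration in $y$, and the homotopy identity $dK_y\omega + K_y d\omega=\omega$ applied to $d\omega$ (instead of $\omega$) gives $K_y d\omega = \omega - dK_y\omega$ if one prefers; more directly, differentiating the defining formula shows $d\xi=d\omega$, because $dK_y(d\omega)=d\omega - K_y d(d\omega) = d\omega$, so $\xi$ is a primitive of $d\omega$ up to a closed form, but in fact each integrand has $d$-image $d\omega$, so $d\xi=d\omega$.

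Next I would turn the $L^p(D)$ estimate on $\xi$ into an application of Proposition \ref{lem311}. Let $R$ be the diameter of $D$, so that $d(x,y)\leq R$ for all $x,y\in D$. Then pointwise
$$|\xi(x)| \ \leq \ \frac{1}{\Vol(D)} \int_D |K_y d\omega(x)|\, dy \ \leq \ \frac{R}{\Vol(D)} \int_D \frac{|K_y d\omega(x)|}{d(x,y)}\, dy,$$
and H\"older's inequality in the $y$-variable (with exponents $q$ and $q'$, pairing against $\mathbf{1}_D$) yields
$$\int_D \frac{|K_y d\omega(x)|}{d(x,y)}\, dy \ \leq \ \Vol(D)^{1/q'}\left\|\frac{K_y d\omega(x)}{d(x,y)}\right\|_{L^q(dy)}.$$
Combining the last two displays gives
$$|\xi(x)| \ \leq \ R\, \Vol(D)^{-1/q} \left\|\frac{K_y d\omega(x)}{d(x,y)}\right\|_{L^q(dy)}.$$

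Taking the $L^p(dx)$-norm of both sides and plugging in Proposition \ref{lem311} yields
$$\|\xi\|_{L^p(D)} \ \leq \ R\, \Vol(D)^{-1/q}\, \Vol(D)^{1/p}\, C(p,q,r,n)\, \|d\omega\|_{L^q(D)} \ = \ R\, \Vol(D)^{1/p-1/q}\, C(p,q,r,n)\, \|d\omega\|_{L^q(D)},$$
which is exactly the claimed inequality with constant $c=\Vol(D)^{1/p-1/q}C(p,q,r,n)R$. The conceptual content — averaging $K_y$ to kill the dependence on $y$ — is simple; the only real tasks are the pointwise identity $d\xi=d\omega$ (where one must check that differentiation commutes with the $y$-integration, standard once one writes $K_y d\omega$ in coordinates) and the slightly delicate trick of inserting the factor $d(x,y)/d(x,y)$ so that the proposition applies. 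The latter is the one step where it matters that $D$ is bounded of diameter $R$ — this is where $R$ enters the constant.
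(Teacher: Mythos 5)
Your proposal is correct and is essentially the paper's own argument: the paper also sets $\xi=Ad\omega$ with $A\omega:=\frac{1}{\Vol(D)}\int_D K_y\omega\,dy$, obtains $d\xi=d\omega$ from the homotopy identity, and inserts the factor $d(x,y)/d(x,y)$ before applying H\"older with exponents $q,q'$ in the $y$-variable and then Proposition \ref{lem311}. The only cosmetic difference is that you bound $d(x,y)\leq R$ pointwise before H\"older, while the paper keeps $d(x,y)$ inside the $L^{q'}(dy)$ norm and bounds $\sup_x\|d(x,y)\|_{L^{q'}(dy)}\leq R\,\Vol(D)^{1/q'}$ afterward; both yield the same constant $c=\Vol(D)^{1/p-1/q}C(p,q,r,n)R$.
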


\begin{proof}
Define an average homotopy operator $A$ by the formula
$$ A\omega := \frac{1}{\Vol(D)}\int_D K_y\omega dy.$$
Set $\xi=Ad\omega$.
Note that $ dA\omega+Ad\omega=\omega $ and therefore $d\xi=d\omega$. 
Denote by $R$ the diameter of $D$.
Using H\"older inequality and  Proposition \ref{lem311} we obtain the following estimate.
\begin{eqnarray*}
\left\| A d \omega \right\|_{L^p(D)} &=&
\left\| \frac{1}{\Vol(D)}\int_D K_y d \omega(x) dy \right\|_{L^p(D,dx)}\\ 
&=&
\frac{1}{\Vol(D)}\left\| \int_D \frac{K_y d \omega(x) }{d(x,y)}d(x,y)dy \right\|_{L^p(D,dx)} \\ 
\text{(H\"older inequality)}&\leq&
\frac{1}{\Vol(D)}
\left\| \left\| \int_D \frac{K_y d \omega(x)}{d(x,y)}\right\|_{L^q(dy)}
\left\|d(x,y)\right\|_{L^{q'}(dy)} \right\|_{L^p(D,dx)} \\
&\leq &
\frac{1}{\Vol(D)}
\sup_{x\in D} \left\|d(x,y)\right\|_{L^{q'}(dy)}
\left\| \left\| \int_D \frac{K_y d \omega(x)}{d(x,y)}\right\|_{L^q(dy)}
\right\|_{L^p(D,dx)} \\
\text{(Proposition \ref{lem311})}&\leq&
{\Vol (D)^{1/p-1/q}}
c(p,q,k,n) R \|d\omega \|_{L^q(D)}.
\end{eqnarray*}
\end{proof}
%
%
%
%
%
%
%
%
%
%
%
%
%
%
%
%
%
%
%
%
%

\section{Globalization of Poincar\'e type inequality}\label{globalization}
In this section we describe a constructive method of proof of Poincar\'e type inequality on a compact manifold. The idea of our construction was inspired from the construction of double \v{C}ech-De Rham complex (see [BT]).
  
The main Theorem of this section is
\begin{thm}\label{global_Poincare}(Global Poincar\'e Inequality)
Let $M$ be a compact Riemannian manifold and $\omega$ an exact $r$-form on it. Suppose that 
$p$ and $q$ are as in Proposition \ref{lem311}.
There exists an
$(r-1)$-form $\xi$ on $M$ such that
\begin{equation}\label{prob_1}
 d\xi=\omega\ \  \text{ and }\ \  \| \xi \|_{L^p(M)}\lesssim \| \omega\|_{L^q(M)} .
 \end{equation}
\end{thm}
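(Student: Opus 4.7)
The plan is to globalize Theorem \ref{loc_Poincare_ineq} via a \v{C}ech-de Rham zig-zag, tracking $L^p$ estimates at every step. Since $M$ is compact, I would choose a finite good cover $\U=\{U_\alpha\}_{\alpha=1}^N$ by geodesically convex open sets such that every nonempty finite intersection $U_{\alpha_0\cdots\alpha_k}:=U_{\alpha_0}\cap\cdots\cap U_{\alpha_k}$ is itself geodesically convex, and each such intersection is bi-Lipschitz equivalent, through normal coordinates, to a convex subset of $\R^n$ with bi-Lipschitz constant bounded by a universal $L$ depending only on $M$. This pulls Theorem \ref{loc_Poincare_ineq} back to each intersection at the cost of a multiplicative constant absorbed into $\lesssim$. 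Fix a smooth partition of unity $\{\rho_\alpha\}$ subordinate to $\U$.

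Next, I would build local primitives by descending induction on the form degree. First, Theorem \ref{loc_Poincare_ineq} applied on each $U_\alpha$ produces an $(r-1)$-form $\eta^{(0)}_\alpha$ on $U_\alpha$ with $d\eta^{(0)}_\alpha=\omega|_{U_\alpha}$ and $\|\eta^{(0)}_\alpha\|_{L^p(U_\alpha)}\lesssim\|\omega\|_{L^q(U_\alpha)}$. For $k=1,\ldots,r-1$ assume inductively that $(r-k)$-forms $\eta^{(k-1)}_{\alpha_0\cdots\alpha_{k-1}}$ on every $k$-fold intersection have been constructed with $d\eta^{(k-1)}=\delta\eta^{(k-2)}$, where $\delta$ denotes the \v{C}ech coboundary. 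Then $\delta\eta^{(k-1)}$ is closed on each $(k+1)$-fold intersection (since $d\delta=\delta d$ and $\delta^2=0$), so a further application of Theorem \ref{loc_Poincare_ineq} yields $(r-1-k)$-forms $\eta^{(k)}_{\alpha_0\cdots\alpha_k}$ with $d\eta^{(k)}=\delta\eta^{(k-1)}$ on $U_{\alpha_0\cdots\alpha_k}$, together with the corresponding $L^p$-$L^q$ bound.

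Finally I would assemble
\[
\xi\;:=\;\sum_{k=0}^{r-1}(-1)^{k(k-1)/2}\sum_{\alpha_0,\ldots,\alpha_k}\rho_{\alpha_0}\,d\rho_{\alpha_1}\wedge\cdots\wedge d\rho_{\alpha_k}\wedge\eta^{(k)}_{\alpha_0\cdots\alpha_k},
\]
each summand extended by $0$ outside the corresponding intersection. A direct calculation based on $\sum_\alpha d\rho_\alpha=0$ and the zig-zag identity $d\eta^{(k)}=\delta\eta^{(k-1)}$ makes the intermediate contributions telescope, yielding $d\xi=\omega$ globally; at the top of the spiral ($k=r-1$) the possible residual \v{C}ech term represents the de Rham cohomology class of $\omega$, which vanishes by the assumed exactness of $\omega$. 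The $L^p$ estimate follows from the triangle inequality applied to the finite sum, uniform bounds on $\rho_\alpha$ and $d\rho_\alpha$ on the compact manifold $M$, and iterating the local Poincar\'e inequality along the finite chain of intersections.

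The main obstacle is the exponent bookkeeping along the zig-zag: Theorem \ref{loc_Poincare_ineq} converts an $L^q$ input on an intersection into an $L^p$ output, but to feed $\eta^{(k)}$ into the next Poincar\'e step one needs it in $L^q$ on the next finer intersection. On each intersection this is handled by H\"older's inequality with a uniform constant (since $M$ is compact and the cover is finite) when $p\geq q$, while when $p<q$ one must stay carefully inside the admissible exponent range of Proposition \ref{lem311} at each iteration. A secondary and more conceptual point is verifying that the spiral actually closes, i.e.\ that the top-level \v{C}ech cocycle is cohomologically trivial; this is precisely where the global assumption that $\omega$ is exact is used.
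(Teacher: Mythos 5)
Your overall strategy is the same as the paper's: descend through the \v{C}ech--de~Rham double complex producing local primitives $\eta^{(k)}$ on the $k$-fold intersections, then collapse back to a global form with a partition of unity (your closed-form sum $\sum_k \pm\,\rho_{\alpha_0}\,d\rho_{\alpha_1}\wedge\cdots\wedge d\rho_{\alpha_k}\wedge\eta^{(k)}_{\alpha_0\cdots\alpha_k}$ is exactly what one gets by unwinding the paper's recursion $\delta x^{r-t}=\xi^{r-t}-dx^{r-t+1}$ with the homotopy $(h\beta)_I=\sum_j\rho_j\beta_{jI}$). However, there is a genuine gap at the top of the zig-zag. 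The residual term at level $k=r-1$ is the \v{C}ech $r$-cocycle of constants $\delta\eta^{(r-1)}$, and exactness of $\omega$ only makes its \emph{cohomology class} vanish, not the cocycle itself. Already for $r=1$: with $\omega=df$ one has $\eta^{(0)}_\alpha=f+c_\alpha$ for constants $c_\alpha$, and $d\bigl(\sum_\alpha\rho_\alpha\eta^{(0)}_\alpha\bigr)=\omega+\sum_\alpha c_\alpha\,d\rho_\alpha$, which is not $\omega$ in general. So your formula as written does not satisfy $d\xi=\omega$; you must first find a constant \v{C}ech cochain $c$ with $\delta c=\delta\eta^{(r-1)}$ and replace $\eta^{(r-1)}$ by $\eta^{(r-1)}-c$ before collapsing. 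This is precisely the content of Theorem \ref{exist_const}, and it is the one place where exactness of $\omega$ is actually consumed (via the de~Rham--\v{C}ech isomorphism, Proposition \ref{int_isom}, or via integration over cycles).

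Moreover, even granting the existence of $c$, your $L^p$ estimate is incomplete without a bound $\|c\|_{L^p}\lesssim\|\omega\|_{L^q(M)}$; an arbitrary solution of the linear system $\delta c=\delta\eta^{(r-1)}$ carries no such bound. The paper handles this by exhibiting $c_I=\sum_L b_{I,L}(\delta\xi^{r-1})_L$ with coefficients $b_{I,L}$ depending only on the cover (Theorem \ref{exist_const} and Corollary \ref{esti_const}), which then inherits the estimate from Proposition \ref{estimate_1}. A minor remark on your ``exponent bookkeeping'' worry: it dissolves once you note that only the first application of the local inequality need be $L^q\to L^p$; all subsequent steps can be run with both exponents equal to $p$, and the pair $(p,p)$ always lies in the admissible range of Proposition \ref{lem311}, so no H\"older juggling or case distinction for $p<q$ is required.
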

In what follows we describe the construction of the form $\xi$ from the latter Theorem.
We begin with some basic definitions.
\begin{dfs}\label{cech_complex}
Let $M$ be a Riemannian manifold. A subset $D\subset M$ is called {\bf convex} if
for every two points $p,q\in D$ there exists a unique geodesic that connects $p$ with $q$
and lies entirely in $D$.
Let $\U=\{U_i\}$ be a cover of $M$. 
Denote by $U_I$ the set $U_{i_0}\cap\dots\cap U_{i_s}$ where $I=(i_0,\dots, i_s)$.
The cover $\U$ is called {\bf good cover} if
every  $U\in\U$ is convex.
The {\bf nerve complex} of $\U$ is a simplicial complex $(C_j(\U),\pa)$ 
generated by $\{[I] : U_I \neq \emptyset, I=(i_0,\dots,i_{j}) \}$ 
where $\pa : C_{j+1}(\U)\to C_{j}(\U) $ is defined by
$$ \pa [(i_0,\dots,i_{j+1})]:=\sum_{k} (-1)^{k} [({i_0,\dots,\hat{i_k}\dots,i_{j+1}})] .$$
The {\bf \v{C}ech complex } associated with the cover $\U$ is denoted by $(C^j(\U),\delta)$ where $C^j(\U):=Hom (C_j(\U),\R)$
and $\delta:=\pa^* : C^j(\U)\to C^{j+1}(\U)$ is the dual operator to $\pa$. 
The $k^{th}$ cohomology group of $C^\bullet(\U)$ is denoted by $H^k(C^{\bullet}(\U))$.
\end{dfs}
\begin{rk}
It is well known that sufficiently small balls in a Riemannian manifold $M$ are convex (see [D] Proposition 4.2). Therefore, there exists a good cover $\U$ for $M$.
\end{rk}



From here on, we will assume that we are in the setting of Theorem \ref{global_Poincare}.
Let $\U:=\{U_i\}$, $i=1,\dots,N$ be a good cover of $M$.
%
In the definition below we define the \v{C}ech complex associated with the sheaf of smooth $r$-forms on $M$.

\begin{df}
Set 
$$K^{r,0}:=\Omega^r(M),\ \ \ \  K^{r,s} := \bigoplus_{i_0<\dots<i_{s-1}}\Omega^{r}(U_I) $$
Let $\alpha\in K^{r,s}$. Denote  by $\alpha_I$, $I=(i_0,\dots,i_{s-1})$ the components of $\alpha$. Define $\delta: K^{r,s}\to K^{r,s+1}$, $s\geq0$,
$$ (\delta \alpha)_J := \left.\left(\sum_{t=0}^{s} (-1)^{t} \alpha_{j_0\dots \hat{j_t}\dots j_{s}}\right)\right|_{U_J},\ \ \  J=(j_0,\dots,j_{s})\ . $$ 
Define an $L^p$ norm on $K^{r,s}$ as follows.
$$ \|\alpha \|_{L^p(K^{r,s})} := \sum_{i_0<\dots<i_{s-1}} \|\alpha_I \|_{L^p(U_I)}.  $$
\end{df}

\begin{conv}\label{conv_1}
We will use the following convention. If $\alpha\in K^{r,s}$ with components $\alpha_I$, $I=(i_0,\dots,i_{s-1})$, $i_0<\dots<i_{s-1}$ and $\tau$ is a permutation of $\{0,\dots\ s-1\}$ then \mbox{$\alpha_I=\alpha_{\tau(I)}sign(\tau)$}.
\end{conv}
In the next proposition we list fundamental properties of the complex $(K^{r,\bullet},\delta)$.
\begin {prop}\label{delta_exact}
${\ }$
\begin{enumerate}[i.]
\item $(K^{r,\bullet},\delta)$ is a complex, i.e. $\delta^2=0$.
\item $(K^{r,\bullet},\delta)$ is an exact complex, i.e. $\delta$ cohomology of $(K^{r,\bullet},\delta)$ are trivial and moreover, if $\beta\in K^{r,s+1}$ with $\delta \beta =0$ then there exists $\alpha\in K^{r,s}$ such that $\beta=\delta\alpha$ and 
\begin{itemize}
 \item $\| \alpha\|_{L^p(K^{r,s})} \lesssim \|\beta\|_{L^p(K^{r,s+1})} $,
 \item  $\|d\alpha \|_{L^p(K^{r+1,s})} \lesssim \| \beta \|_{L^p(K^{r,s+1})} + \| d\beta \|_{L^p(K^{r+1,s+1})} .$
\end{itemize}
\end{enumerate}
\end{prop}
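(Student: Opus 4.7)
The plan is to imitate the classical Čech-theoretic proof of exactness via a partition-of-unity homotopy, taking care to verify that the homotopy behaves well under $L^p$ norms (on forms and their exterior derivatives). Since $M$ is compact and the cover $\U=\{U_i\}$ is finite, we may fix once and for all a smooth partition of unity $\{\rho_i\}$ subordinate to $\U$, and put $C_\U := \max_i \|d\rho_i\|_{L^\infty(M)}<\infty$. All implicit constants below are allowed to depend on $\U$, on $\{\rho_i\}$, on $r$, $p$, and on the cardinality $N$ of $\U$.

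Define the Čech homotopy operator $K:K^{r,s+1}\to K^{r,s}$ (for $s\ge 0$) by
$$ (K\beta)_I \;:=\; \sum_{i} \rho_i\, \beta_{iI}, \qquad I=(i_0,\dots,i_{s-1}), $$
where, following Convention \ref{conv_1}, the tuple $iI$ is reordered with the appropriate sign, and each summand $\rho_i\beta_{iI}$ is extended by zero outside $U_i\cap U_I$. The identity $\delta^2=0$ is immediate from the alternating-sum definition of $\delta$. A direct bookkeeping calculation (exactly as in the proof that the Čech complex of a sheaf is a fine resolution) shows
$$ K\delta+\delta K = \mathrm{id} \qquad\text{on } K^{r,s+1},\ s\ge 0, $$
and $K\delta = \mathrm{id}$ on $K^{r,0}$. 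In particular, if $\delta\beta=0$ then $\alpha:=K\beta$ satisfies $\delta\alpha=\beta$, which proves exactness.

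For the $L^p$ estimates, fix such an $\alpha=K\beta$. Pointwise, $0\le\rho_i\le 1$ yields
$$ |\alpha_I(x)| \;\le\; \sum_{i}\rho_i(x)\,|\beta_{iI}(x)| \;\le\; \sum_i |\beta_{iI}(x)|. $$
Integrating, summing over $I$, and using the finiteness of $\U$ gives $\|\alpha\|_{L^p(K^{r,s})}\lesssim \|\beta\|_{L^p(K^{r,s+1})}$, establishing the first inequality. For the second, differentiate:
$$ d\alpha_I \;=\; \sum_i d\rho_i\wedge \beta_{iI} + \sum_i \rho_i\, d\beta_{iI}. $$
Since $|d\rho_i|\le C_\U$ and $0\le\rho_i\le 1$, we obtain pointwise
$$ |d\alpha_I(x)| \;\lesssim\; \sum_i |\beta_{iI}(x)| + \sum_i |d\beta_{iI}(x)|, $$
so taking $L^p$ norms and summing over $I$ produces
$$ \|d\alpha\|_{L^p(K^{r+1,s})} \;\lesssim\; \|\beta\|_{L^p(K^{r,s+1})} + \|d\beta\|_{L^p(K^{r+1,s+1})}, $$
which is the second inequality.

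There is no real analytic obstacle here; the only points demanding care are (a) the sign conventions in verifying $K\delta+\delta K=\mathrm{id}$, which must be handled consistently with Convention \ref{conv_1}, and (b) the appearance of the extra $\|\beta\|_{L^p}$ term in the bound on $\|d\alpha\|_{L^p}$, which is forced by the $d\rho_i\wedge\beta_{iI}$ contribution and explains why both summands are needed on the right-hand side. Compactness of $M$ is used only to ensure $N<\infty$ and $C_\U<\infty$, which keeps the suppressed constants finite.
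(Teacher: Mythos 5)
Your proposal is correct and follows essentially the same route as the paper: the paper also constructs $\alpha$ via the partition-of-unity formula $\alpha_{i_0,\dots,i_{s-1}}=\sum_j\rho_j\beta_{j,i_0,\dots,i_{s-1}}$, verifies $\delta\alpha=\beta$ by direct computation, and obtains both $L^p$ estimates exactly as you do, using $0\le\rho_j\le1$ and the boundedness of $d\rho_j$ on the compact manifold. The only cosmetic difference is that you package the construction as a homotopy operator $K$ with $K\delta+\delta K=\mathrm{id}$, whereas the paper only applies it to $\delta$-closed elements.
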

\begin{proof}
The proof of this proposition without estimates can be found in [BT] Proposition 8.3 and 8.5.
Part $(i)$ follows from a direct computation of $\delta^2$. For part $(ii)$, suppose that $\beta\in K^{r,s+1}$, $\delta\beta=0$. Let $\rho_j$ be a partition of unity subordinate to the cover $\{U_i\}$. Set 
\begin{equation}\label{gluing}
\alpha_{i_0,\dots,i_{s-1}}:= \sum _{j} \rho_j\beta_{j,i_0,\dots,i_{s-1}}. 
\end{equation}
Direct computation shows that $\delta \alpha = \beta$. 

Now the first estimate can be obtained as follows.
\begin{eqnarray}
\|\alpha \|_{L^p(K^{r,s})} &=& \sum_{i_0<\dots<i_{s-1}}\| \sum _{j} \rho_j\beta_{j,i_0,\dots,i_{s-1}} \|_{L^p(U_I)}     \nonumber \\
&\leq&
\sum_{i_0<\dots<i_{s-1}} \sum _{j} \|  \rho_j\beta_{j,i_0,\dots,i_{s-1}} \|_{L^p(U_I)}\nonumber \\
&\leq &
\sum_{i_0<\dots<i_{s-1}} \sum _{j} \|  \beta_{j,i_0,\dots,i_{s-1}} \|_{L^p(U_{j,I})}\nonumber \\
&=& \| \beta \|_{L^p(K^{r,s+1})}.\nonumber
\end{eqnarray}
And for the second estimate,
\begin{eqnarray}
\|d\alpha \|_{L^p(K^{r+1,s})} &=& \sum_{i_0<\dots<i_{s-1}}\| \sum _{j} d\rho_j\wedge\beta_{j,i_0,\dots,i_{s-1}} + \rho_j\wedge d\beta_{j,i_0,\dots,i_{s-1}}  \|_{L^p(U_I)}     \nonumber \\
&\leq&
\sum_{i_0<\dots<i_{s-1}} \sum _{j}\| d\rho_j\wedge\beta_{j,i_0,\dots,i_{s-1}}\|_{L^p(U_I)}  + \|\rho_j d\beta_{j,i_0,\dots,i_{s-1}}  \|_{L^p(U_I)}    
\nonumber \\
&\lesssim &
\sum_{i_0<\dots<i_{s-1}} \sum _{j}\| \beta_{j,i_0,\dots,i_{s-1}}\|_{L^p(U_{j,I})}  + \|d\beta_{j,i_0,\dots,i_{s-1}}  \|_{L^p(U_{j,I})}    
\nonumber \\
&=&
\| \beta \|_{L^p(K^{r,s+1})} + \| d\beta \|_{L^p(K^{r+1,s+1})}. \nonumber
\end{eqnarray}
\end{proof}
%
%
%
%
%
%
Before we give the general construction of the form $\xi$ that satisfies \ref{prob_1} we illustrate 
the construction on an example. 


\begin{ex}\label{ex1}
Suppose that $\omega$ is a closed
$2$ form on $M$. Consider the following table.
\begin{table}[H]
\caption{Construction of $\xi^k$}
\label{tbl1}
\begin{tabular}{c|c||c|c|c|c|}
\cline{2-6}
             &2   & $\omega \rightarrow $      &$\omega_{i_0}$ & &  \\
\cline{2-6} 
             &1   &            & $\ \ \ \  \begin{array}{l} \uparrow \\ \xi^0_{i_0}\rightarrow  \end{array} $  & $\begin{array}{l} \\ (\delta\xi^0)_{i_0,i_1}\end{array}$ & \\
\cline{2-6} 
$d\ \uparrow$&0   & & & $\begin{array}{l} \uparrow \\ \xi^1_{i_0,i_1}\rightarrow  \end{array} $   & $\begin{array}{l} \\ (\delta\xi^1)_{i_0,i_1,i_2}\end{array}$ \\

\cline{2-6}
             &     &$\Omega^\bullet(M)$      & $\bigoplus_{i_0}\Omega^\bullet(U_{i_0})$  &   $\bigoplus_{i_0,i_1}\Omega^\bullet(U_{i_0,i_1}) $&   $\bigoplus_{i_0,i_1,i_2}\Omega^\bullet(U_{i_0,i_1,i_2})$  
             \\
 \cline{2-6}
\multicolumn{4}{l}{$\ \ \ \ \ \ \ \ \ \ \ \ \ \delta\ \ \rightarrow$}\\
\end{tabular}
\end{table}

An entry in the table represents the components of an element in the space indicated in
the the same column at bottom row.
The vertical arrows represent the exterior derivative $d$ and 
the horizontal arrows represent the action of differential $\delta$.
Start off by placing $\omega$ in the first column of the table in the second row (corresponding 
to the degree of the form). 
Apply $\delta$ to $\omega$ to obtain an element $\oplus\omega_{i_0}$ in the second column
of the table. Since $d\omega_{i_0}=0$ and $\U$ is a good cover, we  can apply the 
local Poincar\'e inequality to obtain an element $\xi^0:=\oplus\xi^0_{i_0}$ such that
$$d\xi^0_{i_0}=\omega_{i_0} $$  
and 
$$\|\xi^0_{i_0} \|_{L^p(U_{i_0})}\lesssim \|\omega \|_{L^q(U_{i_0})}\leq \|\omega \|_{L^q(M)} \ \text{ for all } i_0.$$
Next, we apply $\delta$ to $\xi^0$ to get an element $\delta\xi^0:=\oplus(\delta\xi^0)_{i_0,i_1}$.
Observe that 
$$ d\delta\xi^0=\delta d\xi^0 = \delta \delta^0 \omega=0. $$ 
Therefore, once again, we can apply local Poincar\'e inequality to $\delta \xi^0$ to obtain
an element $\xi^1:=\oplus \xi^1_{i_0,i_1}$ such that 
$$d\xi^1 = \delta \xi^0$$  and 
\begin{eqnarray*}
\| \xi^1_{i_0,i_1} \|_{L^p(U_{i_0,i_1})} \lesssim \| (\delta\xi^{0})_{i_0,i_1}\|_{L^p(U_{i_0,i_1})}&\leq& \|\xi^0_{i_0} \|_{L^p(U_{i_0})}+\|\xi^0_{i_1} \|_{L^p(U_{i_1})} \\
&\leq& 2 \|\omega \|_{L^q(M)},
\end{eqnarray*}
for all $i_0,i_1$.
Finally, note that 
$$d \delta\xi^1 = \delta d\xi^1 = \delta \delta\xi^0 = 0  .$$
Since the components $(\delta \xi^1)_{i_0,i_1,i_2}$ of $\delta \xi^1$ are functions with zero exterior derivatives it follows that they are constants.
So far we have only used the fact that $\omega$ is closed.  In order to find
a global form $\xi$  that satisfies (\ref{prob_1}) we have to 
assume that $\omega$ is exact. 
Therefore, in the next step of the construction we assume that $\omega$ is exact and 
find a global $(r-1)$ form $\xi$ that satisfies (\ref{prob_1}).
By Theorem \ref{exist_const} below, there exists an element $c\in\bigoplus \Omega^0(U_{i_0,i_1})$ with constant components $c_{i_0,i_1}$  for all $i_0,i_1$
such that 
$$\delta\xi^1- \delta c =\delta(\xi^1-c)=0.$$ 
Moreover, by Corollary \ref{esti_const} we have 
$$ \| c_{i_0,i_1} \|_{L^p(U_{i_0,i_1})}\lesssim \|\omega \|_{L^q (M)}. $$
We will construct (inductively) elements $x^1\in \bigoplus\Omega^0(U_{i_0})$ and 
$x^0\in \Omega^1(M)$ such that $\xi:=x^0$ satisfies (\ref{prob_1}), see Table \ref{tbl2} below.
Note that each row $r$ of the latter table is the complex $(K^{r,\bullet},\delta)$. 
By Proposition \ref{delta_exact} each such row is exact. 
Therefore, by the same proposition, there exists an element $x^1$ such that 
$\delta x^1 = \xi^1-c$ and the following estimates hold
$$ \| x^1\|_{L^p(K^{0,1})}\lesssim\|\xi^1-c \|_{L^p(K^{0,2})} $$
and
$$
\| dx^1\|_{L^p(K^{1,1})}\lesssim\|\xi^1-c \|_{L^p(K^{0,2})}+\|d(\xi^1-c )\|_{L^p(K^{1,2})}.
$$
\begin{table}[H]
\caption{Construction of $x^k$}
\label{tbl2}
\begin{tabular}{c|c||c|c|c|c|}
\cline{2-6}
&2   &$\ \ \omega \rightarrow $ & $\omega_{i_0}$ & &  \\
\cline{2-6} 

&1   & 
$\ \ \ \  \begin{array}{l} \uparrow \\ x^0\rightarrow  \end{array}$ & 
$\ \ \ \  \begin{array}{l}\ \ \ \ \ \uparrow \\ \xi^0_{i_0}-dx^1_{i_0}\rightarrow  \end{array} $& $\begin{array}{l} \\0,\ \ \ \ \ \ \  (\delta\xi^0)_{i_0,i_1}\end{array}$ & \\
\cline{2-6} 

$d\ \uparrow$&0& 
&$\begin{array}{l}  \\ x^1_{i_0}\rightarrow  \end{array} $ & $\begin{array}{l}\ \  \ \ \ \ \ \ \ \  \uparrow \\ \xi^1_{i_0,i_1}-c_{i_0,i_1}\rightarrow  \end{array} $   & $\begin{array}{l} \\ 0\end{array}$ \\
\cline{2-6}
&&$\Omega^\bullet(M)$      & $\bigoplus_{i_0}\Omega^\bullet(U_{i_0})$  &   $\bigoplus_{i_0,i_1}\Omega^\bullet(U_{i_0,i_1}) $&   $\bigoplus_{i_0,i_1,i_2}\Omega^\bullet(U_{i_0,i_1,i_2})$\\  
\cline{2-6}
\multicolumn{4}{l}{$\ \ \ \ \ \ \ \ \ \ \ \ \ \delta\ \ \rightarrow$}\\
\end{tabular}
\end{table}
Note that 
$$\delta(\xi^0-dx^1)=d\xi^1-d\delta x^1 = d(\xi^1 - \xi^1 +c) = 0.  $$ 
Hence, by exactness of the second row there exists an element $x^0$ such that
$\delta x^0 = \xi^0-dx^1 $ and we have
$$ \| x^0\|_{L^p(M)}\lesssim\|\xi^0-dx^1 \|_{L^p(K^{1,1})} $$
and
$$
\| dx^0\|_{L^p(M)}\lesssim\|\xi^0-dx^1 \|_{L^p(K^{1,1})}+\|d(\xi^0-dx^1 )\|_{L^p(K^{2,1})}.
$$
Set $\xi:=x^0$. We claim that $d\xi=\omega$. Indeed, 

$$ \delta(\omega- dx^0) = \delta\omega - d\delta x^0 = \delta \omega - d(\xi^0 -dx^1) = 
\delta\omega-d \xi^0 = 0. $$
It follows that $(\omega - dx^0)|_{U_{i_0}} = 0$ for all $i_0$ and therefore $\omega=dx^0$ on $M$.
Moreover, combining all the estimates from above we obtain
\begin{eqnarray*}
\|\xi \|_{L^p(M)}  &\lesssim& \|\xi^0-dx^1 \|_{L^p(K^{1,1})}\\ 
&\lesssim& \|\xi^0\|_{L^p(K^{1,1})} +\|dx^1 \|_{L^p(K^{1,1})} \\
&\lesssim& \|\omega \|_{L^q(M)} + \|\xi^1-c \|_{L^p(K^{0,2})}+\|d\xi^1\|_{L^p(K^{1,2})}\\
&\lesssim& \|\omega \|_{L^q(M)} + \|\xi^1\|_{L^p(K^{0,2})}+\|c \|_{L^p(K^{0,2})}+\|\delta\xi^0\|_{L^p(K^{1,2})}\\
&\lesssim& \|\omega \|_{L^q(M)}.
\end{eqnarray*}
This concludes the example.
\end{ex}
In what follows we give the general construction of 
the forms $\xi^s$ and $x^s$ as in the example above.

\subsection{Construction of elements $\xi^s\in K^{r-s-1,s+1}$}
\begin{df}\label{def_xi}
$ ${}
Set $\xi^{-1}:=\omega$ and  define
$\xi^s$ by setting the $I$'th component, $\xi^s_I$, to be a solution of the equation 
\begin{equation}\label{xi_eq_1}
 d\xi^s_I=(\delta\xi^{s-1})_I 
 \end{equation}
  in $U_I$, $I=(i_0,\dots,i_s)$ such that 
\begin{equation} \label{xi_eq_2}
 \| \xi_I^s \|_{L^p(U_I)} \lesssim \| (\delta\xi^{s-1})_I\|_{L^p(U_I)}\ ,
 \end{equation}
for $0\leq s\leq r-1 $.
\end{df}
We remark that equation (\ref{xi_eq_1}) can be solved with an estimate (\ref{xi_eq_2}) by means of local Poincar\'e inequality since $U_I$ is convex and $d\delta\xi^{s-1}=0$ (cf. Example \ref{ex1}).  
%
We have the following estimate of $\xi^s$ in terms of the norm of $\omega$:
\begin{prop}\label{estimate_1}
Let $I=(i_0,\dots, i_{s})$. Then,
\begin{equation}
 \| \xi_I^s \|_{L^p(U_I)} \lesssim \|\omega\|_{L^q(M)}.
\end{equation}
\end{prop}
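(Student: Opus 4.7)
The plan is to prove Proposition \ref{estimate_1} by induction on $s$, with the base case provided by the local Poincar\'e inequality and the inductive step powered by the defining estimate (\ref{xi_eq_2}) together with the fact that $\delta$ is nothing more than signed restriction to smaller intersections.

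For the base case $s=0$, recall that $\xi^{-1}=\omega$, so $(\delta \xi^{-1})_{i_0}=\omega|_{U_{i_0}}$ and equation (\ref{xi_eq_1}) reads $d\xi^0_{i_0}=\omega|_{U_{i_0}}$ on the convex set $U_{i_0}$. Theorem \ref{loc_Poincare_ineq} then yields
$$\|\xi^0_{i_0}\|_{L^p(U_{i_0})}\;\lesssim\;\|\omega\|_{L^q(U_{i_0})}\;\leq\;\|\omega\|_{L^q(M)},$$
which is the estimate in the form required by (\ref{xi_eq_2}), and the implicit constant is uniform over the finitely many indices $i_0$.

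For the inductive step, suppose $\|\xi^{s-1}_J\|_{L^p(U_J)}\lesssim \|\omega\|_{L^q(M)}$ holds for every admissible multi-index $J$ of length $s$. By the definition of $\delta$,
$$(\delta \xi^{s-1})_I \;=\; \sum_{t=0}^{s}(-1)^{t}\,\xi^{s-1}_{i_0,\dots,\hat{i_t},\dots,i_s}\big|_{U_I},\qquad I=(i_0,\dots,i_s).$$
Since $U_I\subset U_{i_0,\dots,\hat{i_t},\dots,i_s}$, the triangle inequality and monotonicity of $L^p$ under restriction give
$$\|(\delta\xi^{s-1})_I\|_{L^p(U_I)}\;\leq\;\sum_{t=0}^{s}\|\xi^{s-1}_{i_0,\dots,\hat{i_t},\dots,i_s}\|_{L^p(U_{i_0,\dots,\hat{i_t},\dots,i_s})}\;\lesssim\;\|\omega\|_{L^q(M)}$$
by the induction hypothesis. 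Inserting this into (\ref{xi_eq_2}) produces $\|\xi^s_I\|_{L^p(U_I)}\lesssim\|\omega\|_{L^q(M)}$, closing the induction.

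I do not anticipate a genuine obstacle: the only subtlety is bookkeeping the implicit constants so they remain independent of $I$, which is guaranteed because the good cover $\U$ is finite (so only finitely many choices of $I$ arise at each level $s$, and the local Poincar\'e constants from Theorem \ref{loc_Poincare_ineq} on each $U_I$ are uniformly controlled by the geometry of the cover).
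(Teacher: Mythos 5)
Your proof is correct and follows essentially the same route as the paper: induction on $s$, with the base case from the local Poincar\'e inequality and the inductive step combining the defining estimate (\ref{xi_eq_2}), the expansion of $(\delta\xi^{s-1})_I$, the triangle inequality, and monotonicity of the $L^p$ norm under restriction to $U_I$. The remark about uniformity of constants over the finitely many multi-indices is a sensible addition that the paper leaves implicit.
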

%
%
%
%
\begin{proof}
The proof is by induction on $s$. For $s=0$ the statement follows from the local Poincar\'e inequality.
Suppose that $s>0$, we have then
\begin{eqnarray*}
  \| \xi_I^s \|_{L^p(U_I)} &\lesssim& \| (\delta\xi^{s-1})_I\|_{L^p(U_I)} 
  \\&\leq& \nonumber 
 \sum_{t=0}^{s} \| \xi^{s-1}_{i_0\dots\hat{i_t}\dots i_{s-1}}\|_{L^p(U_I)}\\
 &\leq&
 \sum_{t=0}^{s} \| \xi^{s-1}_{i_0\dots\hat{i_t}\dots i_{s-1}}\|_{L^p(U_{i_0\dots\hat{i_t}\dots i_{s}})}
 \\&\leq& \nonumber 
 \sum_{t=0}^{s} \|\omega\|_{L^q(M)}\nonumber\\
  &\lesssim& \|\omega\|_{L^q(M)}. \nonumber  
\end{eqnarray*}
\end{proof}
%
%
%
%

Note that $\xi^{r-1}$ is a collection of $0$-forms that satisfy $d\delta\xi^{r-1}=0$.
It means that $(\delta\xi^{r-1})_I$ are constants on each $U_I$, $I=(i_0,\dots,i_{r})$.
(We use the same notation to denote the extension of $(\delta\xi^{r-1})_I$ to a globally defined constant function on $M$.)

\begin{thm}\label{exist_const}
There exists an element $c\in K^{0,r}$ with constant components $c_I$, $I=(i_0,\dots,i_{r-1})$ such that 
\begin{equation}\label{lin_eq}
(\delta c)_I=\sum_{t=0}^{r} (-1)^t c_{i_0,\dots\hat{i_t}\dots,i_r} = (\delta \xi^{r-1})_I,\ \text{ for all } \ I=(i_0,\dots,i_r).
\end{equation}
Moreover, 
there exist $b_{I,L}\in\R$, $I=(i_0,\dots, i_{r-1})$, $L=(l_1,\dots,l_{r})$ such that
$$ c_I = \sum_L b_{I,L} (\delta \xi^{r-1})_L $$
where $b_{I,L}$ depend only on the cover $\U$.

\end{thm}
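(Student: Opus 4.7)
The plan is to reinterpret the collection of constants $(\delta\xi^{r-1})_I$ as a \v{C}ech cocycle with values in $\mathbb{R}$, and then to use exactness of $\omega$ to show that this cocycle is a \v{C}ech coboundary by constants. The ``moreover'' statement will then follow from the linear-algebraic nature of the \v{C}ech differential.

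First, I would verify that the tuple $\gamma := \delta\xi^{r-1}$ defines an element of the ordinary \v{C}ech $r$-cochain group $C^{r}(\mathcal{U};\mathbb{R})$. By Proposition \ref{delta_exact}(i), $\delta^2 = 0$ on $K^{r,\bullet}$, hence $\delta\gamma = \delta^2 \xi^{r-1} = 0$ in $K^{0,r+1}$. Since each $\gamma_I$ is a real constant, $\gamma$ is a cocycle in the \v{C}ech complex $C^{\bullet}(\mathcal{U};\mathbb{R})$, and finding $c$ with constant entries satisfying (\ref{lin_eq}) is precisely the problem of writing $\gamma = \delta c$ in that complex. Equivalently, the question is whether the class $[\gamma]\in H^{r}(C^{\bullet}(\mathcal{U};\mathbb{R}))$ vanishes.

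Next, I would invoke the standard ``tic-tac-toe'' isomorphism from the double complex of Bott--Tu between \v{C}ech cohomology of a good cover with values in $\mathbb{R}$ and De Rham cohomology of $M$ (see [BT]). By construction, the zig-zag $\omega \mapsto \delta\omega \mapsto \xi^0 \mapsto \delta\xi^0 \mapsto \xi^1 \mapsto \cdots \mapsto \xi^{r-1}\mapsto \delta\xi^{r-1}$ produced inductively in Definition \ref{def_xi} realizes exactly the isomorphism $H^{r}_{dR}(M)\cong H^{r}(C^{\bullet}(\mathcal{U};\mathbb{R}))$ applied to $[\omega]$. Under the hypothesis of Theorem \ref{global_Poincare}, $\omega$ is exact, so $[\omega] = 0$ in De Rham cohomology, and therefore $[\gamma] = 0$. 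Hence there exists $c \in C^{r-1}(\mathcal{U};\mathbb{R})$, i.e.\ an element of $K^{0,r}$ with constant components $c_I$, such that $\delta c = \gamma$.

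For the ``moreover'' part, the \v{C}ech differential $\delta : C^{r-1}(\mathcal{U};\mathbb{R}) \to C^{r}(\mathcal{U};\mathbb{R})$ is a fixed linear map between finite-dimensional real vector spaces, depending only on the nerve of $\mathcal{U}$. Once one fixes (for instance) a linear right inverse of $\delta$ on its image (e.g.\ the Moore--Penrose pseudo-inverse with respect to any chosen inner product on $C^{\bullet}$, or the explicit splitting coming from the formula (\ref{gluing}) applied combinatorially to constants), the solution $c$ depends linearly on the input cocycle $\gamma$. That is, there are real coefficients $b_{I,L}$ determined solely by the combinatorics of the intersections $U_I$, $U_L$ in $\mathcal{U}$, such that
$$ c_I \;=\; \sum_{L=(l_0,\dots,l_r)} b_{I,L}\,\gamma_L \;=\; \sum_L b_{I,L}\,(\delta\xi^{r-1})_L,$$
as claimed.

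The main obstacle is the middle step: rigorously identifying the class $[\delta\xi^{r-1}]$ in $H^{r}(C^{\bullet}(\mathcal{U};\mathbb{R}))$ with $[\omega] \in H^{r}_{dR}(M)$ under the Bott--Tu isomorphism, so that exactness of $\omega$ can be promoted to exactness of the \v{C}ech cocycle. Once this cohomological bridge is in place, both existence of $c$ and the linear (combinatorial) dependence $c_I = \sum_L b_{I,L}\gamma_L$ follow from general principles.
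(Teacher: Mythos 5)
Your proposal is correct and follows essentially the same route as the paper: the paper reduces existence of $c$ to the vanishing of $\sum_I a_I(\delta\xi^{r-1})_I$ on cycles (the dual formulation of ``$\delta\xi^{r-1}$ is a coboundary''), and then establishes this via Proposition \ref{int_isom}, which is exactly your key step --- that the zig-zag of Definition \ref{def_xi} realizes the \v{C}ech--De Rham isomorphism of [BT], so exactness of $\omega$ forces the class of $\delta\xi^{r-1}$ to vanish. Your treatment of the ``moreover'' clause via a fixed linear right inverse of the \v{C}ech differential is also the same linear-algebraic observation the paper makes when it reduces the theorem to the solvability of the linear system (\ref{lin_eq}).
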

We prove this theorem in subsection \ref{pf_one_exist_const}.
As a consequence of Theorem \ref{exist_const}, we obtain the following corollary.
\begin{cor}\label{esti_const}
The constants $c_I$ from Theorem \ref{exist_const} admit the following estimate 
$$\|c_I\|_{L^p(U_I)} \leq \|\omega\|_{L^q(M)}$$
\end{cor}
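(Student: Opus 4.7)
The plan is to use the explicit linear formula $c_I=\sum_L b_{I,L}(\delta\xi^{r-1})_L$ furnished by Theorem \ref{exist_const} and bound the scalar quantities on the right by $\|\omega\|_{L^q(M)}$, using Proposition \ref{estimate_1}. Since the coefficients $b_{I,L}$ depend only on the fixed good cover $\U$, they will be absorbed into the implicit constant $\lesssim$.

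First I would observe that $(\delta\xi^{r-1})_L$ is a constant on the (connected, since convex) open set $U_L$, so
\[
|(\delta\xi^{r-1})_L|\;=\;\frac{\|(\delta\xi^{r-1})_L\|_{L^p(U_L)}}{\Vol(U_L)^{1/p}}.
\]
By the definition of $\delta$ and the triangle inequality,
\[
\|(\delta\xi^{r-1})_L\|_{L^p(U_L)}\;\leq\;\sum_{t=0}^{r}\bigl\|\xi^{r-1}_{l_0\dots\hat{l_t}\dots l_r}\bigr\|_{L^p(U_L)}\;\leq\;\sum_{t=0}^{r}\bigl\|\xi^{r-1}_{l_0\dots\hat{l_t}\dots l_r}\bigr\|_{L^p(U_{l_0\dots\hat{l_t}\dots l_r})},
\]
where in the second inequality I used $U_L\subset U_{l_0\dots\hat{l_t}\dots l_r}$. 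Each summand is $\lesssim \|\omega\|_{L^q(M)}$ by Proposition \ref{estimate_1}. Since the cover $\U$ is fixed and finite, the volumes $\Vol(U_L)$ are bounded below by a positive constant depending only on $\U$, hence $|(\delta\xi^{r-1})_L|\lesssim\|\omega\|_{L^q(M)}$.

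Plugging this into the expression $c_I=\sum_L b_{I,L}(\delta\xi^{r-1})_L$ from Theorem \ref{exist_const} and using that $|b_{I,L}|$ depends only on $\U$, we obtain $|c_I|\lesssim\|\omega\|_{L^q(M)}$. Finally, since $c_I$ is constant on $U_I$,
\[
\|c_I\|_{L^p(U_I)}\;=\;|c_I|\,\Vol(U_I)^{1/p}\;\lesssim\;\Vol(U_I)^{1/p}\,\|\omega\|_{L^q(M)}\;\lesssim\;\|\omega\|_{L^q(M)},
\]
where the last step absorbs the uniformly bounded factor $\Vol(U_I)^{1/p}$ (again the cover being fixed) into the implicit constant.

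I do not expect any serious obstacle here: the corollary is essentially bookkeeping built on the two nontrivial ingredients already in hand, namely the explicit linear representation from Theorem \ref{exist_const} and the inductive bound of Proposition \ref{estimate_1}. The only point one must be slightly careful about is passing between the $L^p$-norm of a locally constant function and its pointwise value, which is trivial once one notes that each $U_L$ is convex (hence connected) with positive, uniformly bounded volume, so the conversion factor $\Vol(U_L)^{1/p}$ is harmless.
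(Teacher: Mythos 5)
Your proof is correct and follows essentially the same route as the paper: both start from the representation $c_I=\sum_L b_{I,L}(\delta\xi^{r-1})_L$, exploit the fact that $(\delta\xi^{r-1})_L$ is a globally defined constant to transfer its $L^p$ norm from $U_L$ to $U_I$ (the paper does this in one step via the ratio $(\Vol(U_I)/\Vol(U_L))^{1/p}$, you via the pointwise value), and then invoke Proposition \ref{estimate_1} together with the triangle inequality for $\delta$. The only cosmetic difference is your more explicit bookkeeping of the volume factors, which changes nothing of substance.
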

\begin{proof}
By Theorem \ref{exist_const} we may represent each $c_I$ as $c_I = \sum_L b_{I,L} (\delta \xi^{r-1})_L $. By triangle inequality we have
\begin {eqnarray}\label{r1}
\| c_I\|_{L^p(U_I)}\leq \sum_L |b_{I,L}|\|(\delta\xi^{r-1})_L\|_{L^p(U_I)}
\end{eqnarray}
Observe that $(\delta \xi^{r-1})_L$ is a globally defined constant function and therefore,
similarly to the proof of Proposition \ref{estimate_1} we have
\begin{equation}\label{r2}
\| (\delta \xi^{r-1})_L \|_{L^p(U_I)}= \| (\delta \xi^{r-1})_L \|_{L^p(U_L)}\left( \frac{Vol(U_I)}{Vol(U_L)}\right)^{1/p} \lesssim \| \omega\|_{L^q(M)}.  
\end{equation}
Now, from (\ref{r1}) and (\ref{r2}) we obtain the desired estimate.
\end{proof}

\subsection{Construction of elements $x^s\in K^{r-s-1,s}$}\label{x_construction}{\ }\\
The final step of the construction is to glue all the forms $\xi^s$, $s=0,\dots,r-1$ to a global solution $\xi$ that satisfies (\ref{prob_1}).  We construct inductively forms $x^s\in K^{r-s-1,s}$
such that $\xi:=x^{0}$ is the desired global form (cf. Example \ref{ex1}).
Set $\tilde{\xi}_I^{r-1}=\xi_I^{r-1} - c_I$ where $c_I$ is given by Theorem \ref{exist_const} and $I=(i_0,\dots,i_{r-1})$. Note that $d\tilde{\xi}_I^{r-1}=d \xi_I^{r-1} $ and $\delta \tilde{\xi}^{r-1}=0$. It follows from Proposition \ref{delta_exact} $(ii)$ that there exists 
a form $x^{r-1}\in K^{0,r-1}$ such that $\delta x^{r-1}=\tilde{\xi}^{r-1}$ and 
$$ \| x^{r-1} \|_{L^p(K^{0,r-1})} \lesssim \| \tilde{\xi}^{r-1} \|_{L^p(K^{0,r})}, $$
$$ \| d x^{r-1} \|_{L^p(K^{1,r-1})} \lesssim \| \tilde{\xi}^{r-1} \|_{L^p(K^{0,r})} + 
\| d\tilde{\xi}^{r-1} \|_{L^p(K^{1,r})}.  $$
It follows from Corollary \ref{esti_const} and Proposition \ref{estimate_1} that
\begin{equation} \label{x r-1 esti}
 \| x^{r-1} \|_{L^p(K^{0,r-1})} \lesssim \| \omega \|_{L^q(M)} 
 \end{equation}
and 
\begin{eqnarray}\label{dx r-1 esti}
\| dx^{r-1} \|_{L^p(K^{1,r-1})} &\lesssim& \| \omega \|_{L^q(M)}+\|\delta\xi^{r-2}\|_{L^p(K^{1,r})}
 \\
&\leq& 
\| \omega \|_{L^q(M)} + \|\xi^{r-2}\|_{L^p(K^{1,r-1})}\nonumber \\ 
&\lesssim& \| \omega \|_{L^q(M)} \nonumber.
\end{eqnarray}

Suppose that $x^{r-(t-1)}$ was constructed. By Proposition \ref{delta_exact} $(ii)$ there exists $x^{r-t}$ such that
$$ \delta x ^{r-t} = \xi^{r-t} - dx^{r-t+1}, $$
where 
\begin{equation}\label{estimate_2}
\| x^{r-t} \|_{L^P(K^{t-1,r-t})} \lesssim  \|\xi^{r-t} - dx^{r-t+1} \|_{L^P(K^{t-1,r-t+1})}, 
\end{equation}
and
\begin{eqnarray}\label{estimate_3}
\\
\| dx^{r-t} \|_{L^P(K^{t,r-t})} &\lesssim& 
\|\xi^{r-t} - dx^{r-t+1} \|_{L^P(K^{t-1,r-t+1})} + \|d\xi^{r-t}\|_{L^P(K^{t,r-t+1})}\nonumber \\
&\leq&
\|\xi^{r-t}\|_{L^P(K^{t-1,r-t+1})}+ \|dx^{r-t+1} \|_{L^P(K^{t-1,r-t+1})} + \nonumber\\ & & \|\delta\xi^{r-t-1}\|_{L^P(K^{t,r-t+1})}\nonumber \\
&\lesssim&\nonumber
\| \omega \|_{L^q(M)} + \|dx^{r-t+1} \|_{L^P(K^{t-1,r-t+1})},\nonumber
\end{eqnarray}
provided that $ \delta(\xi^{r-t} - dx^{r-t+1}) = 0 $. Let us verify this condition:
\begin{eqnarray*}
\delta(\xi^{r-t} - dx^{r-t+1}) &=& \delta\xi^{r-t} - d\delta x^{r-t+1}\\ &=&
\delta\xi^{r-t} - d(\xi^{r-t+1}-dx^{r-t+2}) \\&=&
\delta\xi^{r-t} - d\xi^{r-t+1}\\&=&0.\nonumber
\end{eqnarray*}
Using estimates (\ref{x r-1 esti}),(\ref{dx r-1 esti}) (\ref{estimate_2}) and (\ref{estimate_3}) 
we obtain the following proposition.
\begin{prop}\label{x_estimate}
The forms $x^s$ admit the following estimates:
\begin{enumerate}[(1)]
\item $\| x^{r-t}\|_{L^p(K^{t-1,r-t})}\lesssim \| \omega \|_{L^q(M)} . $
\item $\| dx^{r-t}\|_{L^p(K^{t,r-t})}\lesssim \| \omega \|_{L^q(M)} . $
\end{enumerate}
\end{prop}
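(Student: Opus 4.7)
The plan is to establish both estimates simultaneously by induction on $t$, starting from $t=1$ and increasing. The base case $t=1$ is already handled in the main text: inequalities (\ref{x r-1 esti}) and (\ref{dx r-1 esti}) give precisely
$$\|x^{r-1}\|_{L^p(K^{0,r-1})} \lesssim \|\omega\|_{L^q(M)}, \qquad \|dx^{r-1}\|_{L^p(K^{1,r-1})} \lesssim \|\omega\|_{L^q(M)},$$
obtained by combining Proposition \ref{delta_exact}(ii) applied to $\tilde{\xi}^{r-1}$ with the bounds from Proposition \ref{estimate_1} and Corollary \ref{esti_const}.

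For the inductive step, I would assume $\|x^{r-t+1}\|_{L^p(K^{t-2,r-t+1})} \lesssim \|\omega\|_{L^q(M)}$ and $\|dx^{r-t+1}\|_{L^p(K^{t-1,r-t+1})} \lesssim \|\omega\|_{L^q(M)}$, and derive the estimates for $x^{r-t}$. For part (1), I combine (\ref{estimate_2}) with the triangle inequality to get
$$\|x^{r-t}\|_{L^p(K^{t-1,r-t})} \lesssim \|\xi^{r-t}\|_{L^p(K^{t-1,r-t+1})} + \|dx^{r-t+1}\|_{L^p(K^{t-1,r-t+1})}.$$
The first summand is controlled by $\|\omega\|_{L^q(M)}$ because Proposition \ref{estimate_1} bounds each component $\|\xi^{r-t}_I\|_{L^p(U_I)}$ and the $L^p$ norm on $K^{t-1,r-t+1}$ is a finite sum of such component norms (the cover $\U$ is finite). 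The second summand is bounded by the induction hypothesis.

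Part (2) is even more direct: the estimate (\ref{estimate_3}) already reads
$$\|dx^{r-t}\|_{L^p(K^{t,r-t})} \lesssim \|\omega\|_{L^q(M)} + \|dx^{r-t+1}\|_{L^p(K^{t-1,r-t+1})},$$
and applying the inductive bound for $dx^{r-t+1}$ finishes the claim. One must verify at each step that the cocycle condition $\delta(\xi^{r-t} - dx^{r-t+1}) = 0$ required to invoke Proposition \ref{delta_exact}(ii) holds, but this is exactly the identity already checked in the construction preceding the proposition (using $d\xi^{r-t+1} = \delta\xi^{r-t}$).

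I do not expect a serious obstacle here: all the analytic content is already packaged in Proposition \ref{delta_exact}, Proposition \ref{estimate_1}, and Corollary \ref{esti_const}, and the proposition is essentially a formal bookkeeping induction that propagates the bound $\|\omega\|_{L^q(M)}$ through the telescoping construction. The one point requiring mild care is that the implicit constants absorbed by $\lesssim$ accumulate at each inductive step; since the induction terminates after $r$ steps and $r$ is fixed by the degree of $\omega$, this produces a final constant depending only on $r$, $p$, $q$, and the cover $\U$.
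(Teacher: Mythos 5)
Your proposal is correct and follows essentially the same route as the paper: induction on $t$ with base case given by (\ref{x r-1 esti}) and (\ref{dx r-1 esti}), part (2) obtained from (\ref{estimate_3}) plus the inductive bound on $dx^{r-t+1}$, and part (1) from (\ref{estimate_2}), the triangle inequality, and Proposition \ref{estimate_1}. Your added remarks on the cocycle condition and the bounded accumulation of constants are consistent with the construction preceding the proposition.
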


\begin{proof}
The proof is by induction on $t$. For $t=1$ estimates $(1)$ and $(2)$ are just (\ref{x r-1 esti}) and (\ref{dx r-1 esti}).
Suppose that $t>1$. 
First we prove $(2)$. By (\ref{estimate_3}) and the induction hypothesis we have
\begin{eqnarray}
\\
\| dx^{r-t} \|_{L^P(K^{t,r-t})} 
&\lesssim&\nonumber
\| \omega \|_{L^q(M)} + \|dx^{r-t+1} \|_{L^P(K^{t-1,r-t+1})}\nonumber\\
&\lesssim&
\|\omega\|_{L^q(M)}.\nonumber
\end{eqnarray}
To prove estimate $(1)$ we observe that from Proposition \ref{estimate_1}, (\ref{estimate_2}) and $(2)$ it follows that
\begin{eqnarray}
\| x^{r-t}\|_{L^p(K^{t-1,r-t})} &\lesssim& 
\|\xi^{r-t} - dx^{r-t+1} \|_{L^P(K^{t-1,r-t+1})} \nonumber \\
& \lesssim &
\|\xi^{r-t} \|_{L^P(K^{t-1,r-t+1})} +\| dx^{r-t+1} \|_{L^P(K^{t-1,r-t+1})}\nonumber\\
& \lesssim &
\|\omega\|_{L^q(M)}.\nonumber
\end{eqnarray}
\end{proof}
Finally, set $\xi:=x^0$. To see that $dx^0=\omega$ observe that 
$$ \delta(\omega-dx^0)=\delta\omega - d\delta x^0 = \delta\omega - d(\xi^0- dx^1)=
\delta\omega-d\xi^0=0. $$
But $\delta(\omega-dx^0)_i = (\omega-dx^0)|_{U_i}$  and therefore $\omega=dx^0$ on $M$.
The estimate of $\xi$ follows from Proposition \ref{x_estimate} for $t=r$.
\subsection{Proof of Theorem \ref{exist_const}}\label{pf_one_exist_const}$\ $\\
Note that the linear system of equations (\ref{lin_eq}) has a solution if and only if
$\sum _I (\delta \xi^{r-1})_I a_I =0$  for every $a=\sum a_I [I] \in \ker \pa =\ker \delta^*$.
Therefore, Theorem \ref{exist_const} is equivalent to the following proposition.
\begin{prop}\label{verify_criterion}
Let $a=\sum_I a_I [I] $ be an $r$-cycle then 
$$\sum_I a_I (\delta\xi^{r-1})_I = 0 .$$ 
\end{prop}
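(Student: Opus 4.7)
\noindent\emph{Plan.}
The first step is to recast the claim as the statement that $\delta\xi^{r-1}$ is a \v{C}ech coboundary with real coefficients. Since the paper already notes that each component $(\delta\xi^{r-1})_I$ is a constant (it is a $d$-closed $0$-form on the connected convex set $U_I$), $\delta\xi^{r-1}$ is a genuine real-valued cochain in $C^{r}(\U;\R)$, and it is a cocycle because $\delta^{2}=0$. Once a cochain $c\in C^{r-1}(\U;\R)$ with $\delta c=\delta\xi^{r-1}$ is produced, the adjunction $\langle\delta c,a\rangle=\langle c,\partial a\rangle$ together with $\partial a=0$ immediately gives $\sum_I a_I(\delta\xi^{r-1})_I=0$.

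To produce $c$, I would run a second zig--zag parallel to the construction of $\xi^{\bullet}$, anchored at a global antiderivative of $\omega$. Since $\omega$ is exact, fix $\tilde\eta\in\Omega^{r-1}(M)$ with $d\tilde\eta=\omega$, set $\eta^{0}:=\tilde\eta\in K^{r-1,0}$, and for $t=1,\dots,r-1$ inductively define $\eta^{t}\in K^{r-t-1,t}$ componentwise by solving on each convex $U_{I}$
\[
d\eta^{t}_{I}=\xi^{t-1}_{I}-(\delta\eta^{t-1})_{I}.
\]
The right-hand side is $d$-closed: for $t=1$ this uses $d\xi^{0}=\delta\omega=\delta(d\eta^{0})$, and for $t\geq 2$ a routine computation gives
\[
d(\xi^{t-1}-\delta\eta^{t-1})=\delta\xi^{t-2}-\delta(\xi^{t-2}-\delta\eta^{t-2})=\delta^{2}\eta^{t-2}=0,
\]
so the Poincar\'e lemma on the star-shaped (hence contractible) set $U_{I}$ supplies the desired $\eta^{t}_{I}$.

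At $t=r-1$, the element $\xi^{r-1}-\delta\eta^{r-1}\in K^{0,r}$ is a family of $d$-closed $0$-forms on the connected sets $U_{I}$, hence a family of real constants $c_{I}$. Packaging them into $c:=(c_{I})\in C^{r-1}(\U;\R)$, one computes $\delta c=\delta\xi^{r-1}-\delta^{2}\eta^{r-1}=\delta\xi^{r-1}$, which supplies the coboundary witness required by the first paragraph and completes the proof.

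The main obstacle is the bookkeeping of bi-degrees for the two interlocking towers $\xi^{\bullet}$ and $\eta^{\bullet}$, and the verification that $d$-closedness propagates through the induction; once that is in place, the argument is essentially formal, invoking only the commutation $d\delta=\delta d$ together with the Poincar\'e lemma on contractible open sets. No $L^{p}$ estimates are needed at this stage since the statement is purely about the vanishing of a pairing (the quantitative control of $c$ is then carried out separately in Corollary~\ref{esti_const}).
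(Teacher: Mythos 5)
Your proof is correct. The reduction of the claim to ``$\delta\xi^{r-1}$ is a \v{C}ech coboundary'', followed by the adjunction $\langle\delta c,a\rangle=\langle c,\pa a\rangle=0$ for a cycle $a$, is exactly the right skeleton, and your descending tower $\eta^t$ with $d\eta^t_I=\xi^{t-1}_I-(\delta\eta^{t-1})_I$ is a valid zig-zag in the \v{C}ech--de Rham double complex: the closedness computation $d(\xi^{t-1}-\delta\eta^{t-1})=\delta^2\eta^{t-2}=0$ checks out, the forms being integrated have degree $r-t\geq 1$ at every inductive stage so the Poincar\'e lemma applies on the geodesically convex (hence contractible) sets $U_I$, and only at the last stage does one drop to degree $0$, where closedness plus connectedness yields the constants $c_I$. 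The route is, however, genuinely different from the paper's. The paper proves the stronger Proposition \ref{int_isom} --- that $\omega\mapsto(-1)^{\lfloor r/2\rfloor}\delta\xi^{r-1}_I$ induces an \emph{isomorphism} $H^r(\Omega^\bullet(M))\to H^r(C^\bullet(\U))$ --- by appealing to the two quasi-isomorphisms $h^0,h^1$ into the double complex ([BT], Proposition 8.8), and then reads off that an exact $\omega$ must land on a coboundary $\delta W$; it also supplies a second, independent proof by explicitly integrating $\omega$ over barycentric subdivisions (Lemma \ref{formula}). You extract only the implication actually needed (exact $\implies$ coboundary) and prove it by hand, anchoring the second staircase at a global primitive $\tilde\eta$ of $\omega$; this is more self-contained, avoids the citation of [BT] entirely, and is arguably closer to the constructive spirit of the paper. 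What your argument does not deliver is the bijectivity of $Int$, nor the representation $c_I=\sum_L b_{I,L}(\delta\xi^{r-1})_L$ with coefficients depending only on the cover, which the paper's Theorem \ref{exist_const} provides and which feeds the $L^p$ estimate of Corollary \ref{esti_const}; as you correctly note, that quantitative step is logically separate from the vanishing of the pairing and is handled elsewhere.
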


In the proof of this proposition we construct an explicit isomorphism from De Rham cohomology to \v{C}ech cohomology of a good cover from which Proposition \ref{verify_criterion} follows immediately.
Another proof of the latter proposition is given in subsection \ref{int_omega}. 
In this subsection we prove 
\begin{prop}\label{int_isom}
The map 
$$ Int : H^r(\Omega^\bullet(M))\to H^r(C^\bullet(\U)) $$
defined by 
\begin{equation}\label{def_Int}
 (Int\ \omega) [I] := (-1)^{\lfloor{\frac{r}{2}\rfloor}}\delta \xi^{r-1}_I,  
 \end{equation}
where $\xi^{r-1}_I$  is defined for the form $\omega$ as in Definition \ref{def_xi} is a well defined isomorphism.
\end{prop}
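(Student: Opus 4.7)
The plan is to verify Proposition \ref{int_isom} by the standard zigzag argument on the double complex $(K^{\bullet,\bullet},d,\delta)$. Two facts power the argument: the rows are exact by Proposition \ref{delta_exact}(ii), and each column is exact in positive $d$-degree because the $U_I$ are convex and thus the Poincar\'e lemma (a consequence of Theorem \ref{loc_Poincare_ineq}) applies on each $U_I$.

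First I would verify well-definedness. That $Int\,\omega$ is a \v Cech cocycle is immediate from $\delta^2=0$ applied to $\xi^{r-1}\in K^{0,r}$. Independence from the particular choices of $\xi^0,\ldots,\xi^{r-1}$ in Definition \ref{def_xi} is handled inductively: if $\widetilde{\xi}^s$ is another admissible sequence and $\eta^s:=\xi^s-\widetilde{\xi}^s$, then $d\eta^0=0$, so the Poincar\'e lemma yields $\zeta^0\in K^{r-2,1}$ with $d\zeta^0=\eta^0$; continuing, one obtains $\zeta^s\in K^{r-s-2,s+1}$ satisfying $d\zeta^s=\eta^s-\delta\zeta^{s-1}$ (with $\zeta^{-1}:=0$). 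In top bidegree, $e:=\eta^{r-1}-\delta\zeta^{r-2}$ is $d$-closed, hence a locally constant cochain $e\in C^{r-1}(\U)$, and $\delta\eta^{r-1}=\delta e$ exhibits the difference of the two \v Cech cocycles as a coboundary. Descent to cohomology on the domain follows because if $\omega=d\alpha$ with $\alpha\in\Omega^{r-1}(M)$, the choice $\xi^0_{i_0}:=\alpha|_{U_{i_0}}$ makes $\delta\xi^0=\delta^2\alpha=0$, so $\xi^s=0$ for $s\geq 1$ is a valid choice and $Int\,\omega=0$.

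For bijectivity I would run the zigzag in reverse. Given a \v Cech cocycle $c\in C^r(\U)$, viewed as an element of $K^{0,r+1}$ with $\delta c=0$, I iteratively apply Proposition \ref{delta_exact}(ii): solve $\delta y^r=c$ with $y^r\in K^{0,r}$, then $\delta(dy^r)=d\delta y^r=dc=0$ allows solving $\delta y^{r-1}=dy^r$ with $y^{r-1}\in K^{1,r-1}$, and so on until $y^0\in K^{r,0}=\Omega^r(M)$. The identity $\delta(dy^0)=d(dy^1)=0$, combined with the injectivity of $\delta:\Omega^{r+1}(M)\hookrightarrow K^{r+1,1}$ (the $U_i$ cover $M$), forces $dy^0=0$, so $\omega:=y^0$ is a closed $r$-form. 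A diagram chase then shows that running the forward zigzag of Definition \ref{def_xi} on $\omega$ yields a \v Cech cocycle cohomologous to $c$, so $[Int\,\omega]=[c]$ and $Int$ is surjective. Injectivity is symmetric: if $Int\,\omega=\delta b$ for some $b\in C^{r-1}(\U)$, a reverse zigzag starting from $b$ produces a global primitive $\alpha\in\Omega^{r-1}(M)$ of $\omega$.

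The main technical obstacle is the bookkeeping of signs along the zigzag, which is what pins down the normalization $(-1)^{\lfloor r/2\rfloor}$ in (\ref{def_Int}). The cleanest way is to organize the argument via the total complex $(\mathrm{Tot}(K),D)$ with $D:=d+(-1)^{\bullet}\delta$: both augmentations $\Omega^\bullet(M)\hookrightarrow K^{\bullet,0}$ and $C^{\bullet-1}(\U)\hookrightarrow K^{0,\bullet}$ (the latter by inclusion of constant cochains as the kernel of $d$) are quasi-isomorphisms by exactness of the rows and columns respectively, and the composition of the resulting cohomology isomorphisms is $Int$, with the sign emerging as the accumulated $(-1)^{\bullet}$ of $D$ along the zigzag.
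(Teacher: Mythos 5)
Your proposal is correct and takes essentially the same route as the paper: the paper's proof also passes through the \v{C}ech--De Rham total complex $(K^\bullet,D)$ with $D=d+(-1)^s\delta$, cites [BT, Prop.\ 8.8] for the two augmentation quasi-isomorphisms $h^0:H^r(\Omega^\bullet(M))\to H^r(K^\bullet)$ and $h^1:H^r(C^\bullet(\U))\to H^r(K^\bullet)$, and identifies $Int=(h^1)^{-1}\circ h^0$ via the telescoping identity $\delta\omega-\sum_{j=0}^k(-1)^{\lfloor j/2\rfloor}D\xi^j=(-1)^{\lfloor (k+1)/2\rfloor}\delta\xi^k$, which is precisely the sign bookkeeping you describe in your last paragraph. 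Your explicit zigzag verifications of well-definedness and bijectivity are just the unpacked content of the [BT] citation, so the two arguments coincide in substance.
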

Before we prove this proposition, we show how Proposition \ref{verify_criterion} follows
from it.
\\
{\it Proof of Proposition \ref{verify_criterion}.}
If $\omega$ is an exact $r$-form then $Int\  \omega= \delta W$. 
If $a=\sum a_I[I]$ is a cycle, then 
$$ (Int\ \omega) a = (-1)^{\lfloor{\frac{r}{2}\rfloor}}\sum a_I \delta\xi^{r-1}_I = (\delta W)a = W \pa a = 0.$$ 
\hfill
$\square$

Next, we prove Proposition \ref{int_isom}.
\begin{proof}
Consider an auxiliary complex $(K^\bullet,D)$ defined by 
$$ K^m :=\bigoplus_{r+s=m} K^{r,s+1} $$
and 
$$ D := d + (-1)^{s}\delta \ \text{ on $K^{r,s+1}$} .$$
This complex is called \v{C}ech-De Rham complex, see [BT] for details.
Denote by $H^r(K^\bullet)$ the $r^{th}$ cohomology group of $K^\bullet$.
The map $\delta :\Omega ^r (M) \to K^r$ induces an  isomorphism 
$$h^0: H^r(\Omega^\bullet(M))\to H^r(K^\bullet)$$
([BT], Proposition 8.8 ).
Similarly, the map $g: C^r (\U) \to K^r $, defined by 
sending an element in $C^r(\U)$ to the corresponding element in $K^{0,r+1}$
induces an  isomorphism 
$$h^1: H^r(C^\bullet(\U))\to H^r(K^\bullet).$$
We claim that $Int = (h^1)^{-1}\circ h^0$. Indeed, let us compute the 
action of $(h^1)^{-1}\circ h^0$ on closed form $\omega$.
First applying $h^0$ to $\omega$ we get an element defined by the $D$ cohomology class of $\delta\omega$.
Note that 
$$ D \xi^{s+1}= d\xi^{s+1} + (-1)^s \delta\xi^{s+1}, $$
A direct computation, using the latter formula and fact that $\delta \xi^s = d \xi^{s+1}$, shows that
$$ \delta\omega - \sum_{j=0}^k (-1)^{\lfloor{\frac{j}{2}\rfloor}}D\xi^j = (-1)^{\lfloor{\frac{k+1}{2}\rfloor}}\delta\xi^k ,\ \ k\geq 0.$$
It follows from here that $(h^1)^{-1} \delta^0\omega$ is defined by the element
that sends $[I]$ to $(-1)^{\lfloor{\frac{r}{2}\rfloor}}\delta\xi^{r-1}_I $ which is what
was required to prove.
\end{proof}

%
%
%
%
%

%
%
%
%
%
%
%
%
%
%
%
%
%
%
%
%
%
%
%
\subsection{Integral of a closed form over a chain}\label{int_omega}
The purpose of this subsection is to derive a formula that relates an integral of a closed form $\omega$ over a simplex $\sigma$ with integrals of forms $\xi^s$ (defined according to Definition \ref{def_xi}) over simplices in the barycentric subdivision of $\sigma$. 

Let $T$ be a triangulation of $M$. Denote by $\{1,\dots,N\}$ the set of vertices of $T$ 
and set $\U:=\{U_i\}$ where $U_i:=st(i)$, the open star of vertex $i$.
First we will show that Poincar\'e inequality holds near every finite intersection $U_I$.
Then, we will show that the integral of the closed $r$-form $\omega$ over a cycle $a=\sum a_I [I]$  equals to $(-1)^{\lfloor{\frac{r}{2}\rfloor}}\sum a_I (\delta\xi^{r-1})_{I}$, where $[I]$ is identified here with the simplex of $T$ with vertices $(i_0,\dots,i_r)$.
As a corollary, we immediately obtain another proof of Proposition  \ref{verify_criterion}.
Indeed, the form $\omega$ is exact if and only if $\int_a \omega = 0 $ for every $r$-cycle $a$
of $T$. But if $a=\sum a_I [I]$ then 
$$ 0=\int_a\omega= (-1)^{\lfloor{\frac{r}{2}\rfloor}}\sum a_I (\delta\xi^{r-1})_{I}. $$
\begin{prop}
For every $\eps>0$ there exists a neighborhood $U^\eps_I$ of $U_I$ 
such that $d(a,U_I)<\eps$ for every $a\in U^\eps_I$ and if
$\omega$ is a closed $r$-form defined on $U^\eps_I$ then there exists an $(r-1)$ form
$\xi$ on $U^\eps_I$ such that $\omega=d\xi$ and
$$\|\ \xi \|_{L^p(U^\eps_I)}\lesssim\|\omega \|_{L^q(U^\eps_I)} .$$
\end{prop}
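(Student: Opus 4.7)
The plan is to reduce to the global Poincar\'e inequality (Theorem~\ref{global_Poincare}) applied on a suitable contractible neighborhood of $U_I$ that is thin enough to lie in the $\varepsilon$-tube about $U_I$.

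\emph{Step 1 (geometric construction of $U^\varepsilon_I$).} Let $p_I$ denote the barycenter of the simplex $\sigma_I=[i_0,\dots,i_s]$ spanned by $I$. The first observation is that $U_I=\bigcap_k \text{st}(i_k)$ is the union of the open simplices of $T$ containing $\sigma_I$ as a face, and a direct affine convex-combination computation on each such containing simplex shows that $U_I$ is star-shaped, hence contractible, about $p_I$. For the given $\varepsilon>0$, I would produce an open neighborhood $U^\varepsilon_I\supset U_I$ whose closure is a compact manifold-with-boundary, which is contractible, and which satisfies $d(a,U_I)<\varepsilon$ for every $a\in U^\varepsilon_I$. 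Concretely I smoothly thicken $U_I$ a small amount into $M$, stopping well before the injectivity radius of $M$ and inside the $\varepsilon$-tube of $U_I$; contractibility of the thickening is inherited from $U_I$ provided the thickening is small.

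\emph{Step 2 (good cover and globalization).} Using the convexity-radius argument recalled in the remark after Definition~\ref{cech_complex}, I cover $\overline{U^\varepsilon_I}$ by finitely many geodesically convex open subsets $V_1,\dots,V_N\subset U^\varepsilon_I$, each of diameter less than $\varepsilon$. This is a good cover of the open Riemannian manifold $U^\varepsilon_I$ in the sense of Definition~\ref{cech_complex}. I then run the construction of Section~\ref{globalization} with $M$ replaced by $U^\varepsilon_I$ and with this cover. Because $U^\varepsilon_I$ is contractible, its $r$-th de Rham cohomology vanishes, so the given closed form $\omega$ is exact; equivalently the cycle condition of Proposition~\ref{verify_criterion} holds on the nerve of $\{V_j\}$, and Theorem~\ref{exist_const} produces the constants $c_I$ with the estimate of Corollary~\ref{esti_const}. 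The forms $\xi^s$ of Definition~\ref{def_xi} and the glued forms $x^s$ of Subsection~\ref{x_construction} are then well-defined on $U^\varepsilon_I$, and Proposition~\ref{x_estimate} (with $t=r$) yields $\xi:=x^0$ with $d\xi=\omega$ and
$$\|\xi\|_{L^p(U^\varepsilon_I)}\lesssim \|\omega\|_{L^q(U^\varepsilon_I)}.$$

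\emph{Main obstacle.} The analytic content is already packaged in Section~\ref{globalization}, so the remaining work is purely geometric: producing a contractible $U^\varepsilon_I$ with compact closure, lying inside the $\varepsilon$-tube of $U_I$, and admitting a finite good cover by small geodesically convex sets. This is a standard but slightly technical Riemannian-geometric construction; the delicate point is preserving contractibility under thickening when the combinatorial shape of $U_I$ is irregular. Once $U^\varepsilon_I$ is in hand, the proof is a direct transcription of the globalization construction of Section~\ref{globalization} with $M$ replaced by $U^\varepsilon_I$.
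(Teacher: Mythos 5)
Your proposal is correct in substance and follows the same overall strategy as the paper: build a contractible neighborhood of $U_I$ inside its $\varepsilon$-tube, give it a finite good cover, and rerun the globalization machinery of Section \ref{globalization} (contractibility killing the cohomological obstruction so that the closed form $\omega$ becomes exact and the cochain $\delta\xi^{r-1}$ becomes a coboundary). The difference is in how $U^\varepsilon_I$ is produced, and it matters for the point you flag as your ``main obstacle.'' The paper simply covers the compact set $\overline{U_I}$ by finitely many balls $B_j$ of radius $\varepsilon/2$ and takes $U^\varepsilon_I:=\bigcup_j B_j$; these same balls then serve directly as the good cover, so there is no separate thickening step, no manifold-with-boundary structure, and no second covering argument. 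This collapses your Steps 1 and 2 into one move and avoids the ``delicate point'' of preserving contractibility under a smooth thickening of an irregularly shaped $U_I$ --- though to be fair, the paper's one-line assertion that the union of balls is contractible is itself left unjustified (one still needs, e.g., that for small $\varepsilon$ the union deformation retracts onto the contractible open star $U_I$), so neither route fully discharges that geometric point. Your invocation of Theorem \ref{global_Poincare} should be read, as you indeed say, as rerunning the construction rather than citing the theorem verbatim, since that theorem is stated for compact $M$; with that understanding your argument and the paper's coincide from that point on.
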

\begin{proof}
The set $\overline{U_I}$ can be covered by finitely many balls $B_j$ of radius $\eps/2$.
Let $U^\eps_I$ be the union of those balls. Since the cover $B_j$ is good cover
one can construct elements $\xi^s$ according to Definition \ref{def_xi}.
Denote by $f$ the cochain that sends $[I]$ to $\delta\xi^{r-1}_I$ and note that it 
is a closed. But since $U^\eps_I$ is contractible $f$ is exact. Therefore, 
there exists $(r-1)$ cochain $c$ such that  $f=\delta c$ which means that
$ \delta \xi^{r-1} = \delta c $. Hence, we can construct elements $x^s$ as in subsection \ref{x_construction} such that $\xi:=x^0$ is the desired form.
\end{proof}

The derivation of a formula for an integral of a closed form $\omega$ involves barycentric
subdivision of simplices. In the following definition we introduce our notations for that
purpose.

\begin{dfs}
Let $\sigma=(i_0,\dots,i_r)$ be an $r$-simplex, $J=(j_0,\dots,j_r)$ be a permutation of $\{0,\dots,r\}$ and $t\in\{1,\dots,r\}$.
The {\bf barycenter} of an $s$-face $(i_{j_0},\dots,i_{j_s})$ of $\sigma$ is denoted by
$(i_{j_0},\dots,i_{j_s})^b$.
Set 
$$ Sd_t(J) := \left( (i_{j_0},\dots,i_{j_{t-1}})^b,\dots,(i_{j_0},\dots,i_{j_{r}})^b \right). $$
Set $J':=(j_0,\dots,j_{r-1})$. If $K=(k_0,\dots,k_{r-1})$ is a permutation of $\{0,\dots,r-1\}$ then define $J'_K:=(j_{k_0},\dots,j_{k_{r-1}})$.
\end{dfs}
From now to the end of this subsection, assume that $\omega$ is a closed $r$-form on $M$
and $\xi^s$ were constructed according to Definition \ref{def_xi}.
In the next lemma we derive a formula for $\int_\sigma \omega$ where $\omega$ is an closed form.
\begin{lem}\label{formula}
Let $\omega$ be an exact $r$-form, $\xi^s$ be defined as above and $\sigma=(i_0,\dots,i_r)$, then
\begin{eqnarray}
\int_\sigma \omega &=& (-1)^{\lfloor \frac{m+1}{2} \rfloor}\sum_{J:j_0<\dots<j_m}sign(J)
\int_{\pa Sd_{m+1}(J)}\xi^m_{i_{j_0},\dots,i_{j_m}}\nonumber \\ 
&+& 
(-1)^r\sum_{J:j_0<\dots<j_{r-1}}sign(J)\sum_{t=0}^{m-1}(-1)^{\lfloor \frac{t}{2} \rfloor}
\sum_{K:k_0<\dots<k_t}sign(k)\int_{Sd_{t+1}(J'_K)}\xi^t_{i_{j_{k_0}},\dots,i_{j_{k_t}}},\nonumber
\end{eqnarray}
where $J$ runs over permutations of $\{0,\dots,r\}$ and $K$ runs over permutations of $\{0,\dots,r-1\}$.
\end{lem}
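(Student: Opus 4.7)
The proof proceeds by induction on $m$, invoking Stokes' theorem and the defining relation $d\xi^s_I = (\delta\xi^{s-1})_I$ on $U_I$ repeatedly. The combinatorial backbone is the barycentric subdivision of $\sigma$: we write $\sigma = \sum_{J} \mathrm{sign}(J)\, Sd_1(J)$ where $J$ ranges over permutations of $\{0,\ldots,r\}$, each top-dimensional piece $Sd_1(J)$ has vertex chain $\{(i_{j_0},\ldots,i_{j_s})^b\}_{s=0}^r$, and each sub-face $Sd_{s+1}(J)$ sits inside the convex neighborhood $U_{i_{j_0},\ldots,i_{j_s}}$, because every barycenter $(i_{j_0},\ldots,i_{j_k})^b$ with $k\geq s$ lies in all the open stars $U_{i_{j_0}},\ldots,U_{i_{j_s}}$.

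For the base case $m=0$, start from $\int_\sigma\omega = \sum_J \mathrm{sign}(J)\int_{Sd_1(J)}\omega$ and replace $\omega$ by $d\xi^0_{i_{j_0}}$ on each $Sd_1(J)\subset U_{i_{j_0}}$ (using $\xi^{-1}=\omega$ and $d\xi^0_{i_0}=(\delta\omega)_{i_0}=\omega|_{U_{i_0}}$). Stokes' theorem turns this into $\sum_J \mathrm{sign}(J)\int_{\partial Sd_1(J)}\xi^0_{i_{j_0}}$. Isolate the distinguished face $Sd_2(J)$ of $\partial Sd_1(J)$ (the face opposite $(i_{j_0})^b$): this gives the $\xi^0$ principal term. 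The remaining faces of $\partial Sd_1(J)$ — those obtained by deleting an interior barycenter — assemble into the $t=0$ contribution of the error sum, with the faces lying on $\partial\sigma$ cancelled by the standard adjacency identities of the subdivision.

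For the inductive step from $m-1$ to $m$, rewrite the principal term $\sum_{J:\,j_0<\ldots<j_{m-1}}\mathrm{sign}(J)\int_{\partial Sd_m(J)}\xi^{m-1}_{i_{j_0},\ldots,i_{j_{m-1}}}$ by peeling off the face $Sd_{m+1}(J) \subset \partial Sd_m(J)$ obtained by deleting the vertex $(i_{j_0},\ldots,i_{j_{m-1}})^b$. The key move is a shuffle rearrangement: summing over $J$ with $j_0<\ldots<j_{m-1}$ together with the free choice of $j_m$ among the remaining indices, weighted by the signs $(-1)^s$ produced when one applies the Čech differential via the relation $d\xi^m_I=(\delta\xi^{m-1})_I$, regroups exactly into $\sum_{J':\,j_0<\ldots<j_m}\mathrm{sign}(J')\,(\delta\xi^{m-1})_{i_{j_0},\ldots,i_{j_m}}$. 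Since $(\delta\xi^{m-1})_{i_{j_0},\ldots,i_{j_m}}=d\xi^m_{i_{j_0},\ldots,i_{j_m}}$ on the convex set $U_{i_{j_0},\ldots,i_{j_m}}\supset Sd_{m+1}(J')$, a second Stokes' application converts this into $\sum_{J':\,j_0<\ldots<j_m}\mathrm{sign}(J')\int_{\partial Sd_{m+1}(J')}\xi^m_{i_{j_0},\ldots,i_{j_m}}$, which is precisely the principal term at level $m$. The remaining faces of $\partial Sd_m(J)$ (those arising from deleting a later barycenter vertex $v_s$, $s\geq m-1$) yield the new $t=m-1$ contribution to the error sum, with the simplices $Sd_{t+1}(J'_K)$ appearing naturally from the fact that skipping a vertex $v_s$ in the chain $((i_{j_0})^b,\ldots,(i_{j_0},\ldots,i_{j_r})^b)$ truncates it into the form indexed by $J'_K$.

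The main obstacle will be the sign bookkeeping. The factor $(-1)^{\lfloor(m+1)/2\rfloor}$ in the principal term is the composition of the Stokes' boundary sign, the alternating $(-1)^s$ from $\delta$, and the permutation-sign cost of the shuffle regrouping; verifying $(-1)^{\lfloor m/2\rfloor}$ promotes correctly to $(-1)^{\lfloor(m+1)/2\rfloor}$ requires a careful parity check. The analogous accounting must produce the $(-1)^r\,(-1)^{\lfloor t/2\rfloor}\,\mathrm{sign}(K)$ weights in the error sum, again through a shuffle of the permutations of the last $r-t$ indices. This combinatorial verification, while not conceptually difficult, is the delicate heart of the argument.
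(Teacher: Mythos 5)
Your outline follows essentially the same route as the paper: induction on $m$, with the inductive step carried by a ``transition identity'' (isolated in the paper as Lemma \ref{formula2}) that converts $\sum_J sign(J)\int_{\partial Sd_{t+1}(J)}\xi^t$ into the level-$(t+1)$ principal term plus an error term, via the face decomposition of $\partial Sd_{t+1}(J)$, a shuffle bijection on permutations that reassembles $\delta\xi^t=d\xi^{t+1}$, and a second application of Stokes. So the strategy is right. But you have the cancellation mechanism backwards. Writing $\partial Sd_{t+1}(J)=\sum_{s=t}^{r}(-1)^{s-t}\alpha(J,t,s)$, where $\alpha(J,t,s)$ omits the vertex $(i_{j_0},\dots,i_{j_s})^b$, the faces that cancel in pairs are exactly those with $t<s<r$: each is shared, with opposite sign, by the permutation obtained from $J$ by transposing $j_s$ and $j_{s+1}$, a transposition that preserves the constraint $j_0<\dots<j_t$ and leaves the integrand $\xi^t_{i_{j_0},\dots,i_{j_t}}$ unchanged. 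The two survivors are the distinguished face $\alpha(J,t,t)=Sd_{t+2}(J)$ (which feeds the next principal term) and $\alpha(J,t,r)=Sd_{t+1}(J')$, the single face obtained by deleting $\sigma^b$ --- and it is precisely this face, the one lying on $\partial\sigma$, that produces the error term. Your sketch asserts the opposite: that the faces on $\partial\sigma$ cancel by adjacency and that the faces obtained by deleting intermediate barycenters assemble into the error sum. Carried out literally, that regrouping fails (the intermediate faces sum to zero, and there is nothing left to build the $Sd_{t+1}(J'_K)$ terms from). A related symptom: for $m=0$ the error sum $\sum_{t=0}^{m-1}$ is empty and the base case ends immediately after Stokes, so the extra face-isolation you describe there belongs to the step $m=0\to m=1$, not to the base case.

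Beyond that, the part you explicitly defer --- the sign bookkeeping --- is where essentially all of the paper's work lies: one bijection $(J,s)\mapsto(J_s)$ (reinserting $j_s$ after position $t+1$, costing $(-1)^{t-s+1}$) to produce $(\delta\xi^t)_{i_{j_0},\dots,i_{j_{t+1}}}$, a second bijection $(J,K)\mapsto(J'_K,j_r)$ with $sign(J'_K,j_r)=sign(J)sign(K)$ for the error term, and the parity identities $\lfloor m/2\rfloor+m\equiv\lfloor(m+1)/2\rfloor$ and $(m-1)+\lfloor m/2\rfloor\equiv\lfloor(m-1)/2\rfloor\pmod 2$ to close the induction. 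As written, the proposal is a correct-in-spirit outline with one wrong turn at the cancellation step and the combinatorial core still to be supplied.
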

Before we prove this lemma we show how Proposition \ref{verify_criterion} follows from it.
We will need the following combinatorial lemma.
\begin{lem} \label{bdry_rep}
Let $\sigma=(i_0,\dots,i_r)$ be a simplex and $\xi\in K^{t,r}$ then
$$\pa\sigma = \sum_{J:j_0<\dots<j_{r-1}} sign(J)(i_{j_0},\dots,i_{j_{r-1}}),$$
$$ (\delta \xi)_{i_0,\dots,i_r} = \sum_{J:j_0<\dots<j_{r-1}} sign(J)\xi_{i_{j_0},\dots,i_{j_{r-1}}},$$
where $J$ runs over permutations of $\{0,\dots,r\}$.
\end{lem}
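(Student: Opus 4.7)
The plan is to observe that every permutation $J = (j_0, \dots, j_r)$ of $\{0, \dots, r\}$ satisfying the constraint $j_0 < \dots < j_{r-1}$ is completely determined by the single value $k := j_r$: once $k$ is chosen, the remaining entries $j_0, \dots, j_{r-1}$ are forced to be the elements of $\{0, \dots, r\} \setminus \{k\}$ listed in increasing order. Thus each sum on the right-hand side is in fact indexed by $k \in \{0, \dots, r\}$ rather than by a nontrivial family of permutations, and the combinatorial content of the lemma reduces to identifying the sign of this one distinguished permutation $J_k$ for each $k$.

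I would then compute $\text{sign}(J_k)$ directly. Since $J_k$ is obtained from the identity permutation by cyclically moving the entry $k$ from position $k$ to position $r$, which is a product of $r - k$ adjacent transpositions, $\text{sign}(J_k) = (-1)^{r-k}$, which after absorbing the global orientation factor $(-1)^r$ matches the standard sign $(-1)^k$ appearing in the definition of $\pa$ and $\delta$. Combined with the identification $(i_{j_0}, \dots, i_{j_{r-1}}) = (i_0, \dots, \hat{i_k}, \dots, i_r)$, the right-hand side of the first identity becomes the defining expression
$$\sum_{k=0}^{r} (-1)^k (i_0, \dots, \hat{i_k}, \dots, i_r) = \pa\sigma.$$

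For the second identity the same reduction applies, but now I would invoke Convention \ref{conv_1}: because $(j_0, \dots, j_{r-1})$ is already in increasing order, no additional sign correction is introduced when writing $\xi_{i_{j_0}, \dots, i_{j_{r-1}}} = \xi_{i_0, \dots, \hat{i_k}, \dots, i_r}$, so the right-hand side collapses to $\sum_k (-1)^k \xi_{i_0, \dots, \hat{i_k}, \dots, i_r}$, which is precisely $(\delta\xi)_{i_0, \dots, i_r}$ by definition.

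The only real obstacle is consistent sign bookkeeping: one has to carefully distinguish the sign of a permutation of $\{0, \dots, r\}$ (which is an $(r+1)$-tuple) from any reordering of the $r$-tuple of subindices $(j_0, \dots, j_{r-1})$, and verify that the cyclic shift producing $J_k$ from the identity contributes exactly $(-1)^{r-k}$ as claimed. Once this is pinned down, both identities follow by an immediate reorganization of the standard alternating sums.
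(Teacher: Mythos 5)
Your reduction is exactly the one the paper uses: parametrize the permutations $J$ with $j_0<\dots<j_{r-1}$ by the single value $k=j_r$, identify $(i_{j_0},\dots,i_{j_{r-1}})$ with $(i_0,\dots,\widehat{i_k},\dots,i_r)$, and compare signs. The gap is in your last sign step. Your count is correct: $J_k=(0,\dots,\widehat{k},\dots,r,k)$ is obtained from the identity by $r-k$ adjacent transpositions, so $sign(J_k)=(-1)^{r-k}$. But $(-1)^{r-k}=(-1)^r(-1)^k$, and there is no ``global orientation factor'' in the statement into which the extra $(-1)^r$ can be absorbed: the lemma asserts equality with $\pa\sigma=\sum_k(-1)^k(i_0,\dots,\widehat{i_k},\dots,i_r)$ on the nose, whereas your computation gives $\sum_J sign(J)(i_{j_0},\dots,i_{j_{r-1}})=(-1)^r\,\pa\sigma$. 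Already for $r=1$ the discrepancy is visible: the sum is $(i_0)-(i_1)$ while $\pa(i_0,i_1)=(i_1)-(i_0)$. So the concluding sentence of your argument does not follow from the (correct) computation that precedes it; you cannot wave the $(-1)^r$ away.

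It is worth saying that your careful count in fact exposes a sign slip in the paper's own proof, which asserts $sign(J)=(-1)^{j_r}$ on the grounds that the number of transpositions needed to move $j_r$ into its position equals $j_r$; the correct number is $r-j_r$, as you found (the paper's displayed formula for $\pa\sigma$ there also drops the factor $(-1)^s$). With the standard sign convention --- the one the paper itself uses later, e.g.\ in the identity $sign((J_s))=(-1)^{t-s+1}sign(J)$ --- the two identities of the lemma hold only up to the prefactor $(-1)^r$, i.e.\ as stated they are correct for even $r$ and acquire a minus sign for odd $r$. A correct write-up must either carry this factor explicitly (and track it through Lemma \ref{formula} and the proof of Proposition \ref{verify_criterion}, where only the vanishing of the pairing with a cycle is ultimately needed, so a global sign is harmless) or adjust the convention; it cannot be absorbed silently as in your proposal.
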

\begin{proof}
There exists 1:1 and onto correspondence 
$$ f:\{ s : 0\leq s\leq r \} \to \{ J: J \text{ a permutation of } \{0,\dots,r\},\ j_0 <\dots <j_{r-1}  \}, $$
defined by setting $f(s)=J$ with $j_r=s$. Since $j_0<\dots <j_{r-1}$, a choice of $j_r$ determines the rest of the components of $J$. It is clear that $f$ is 1:1 and onto.
By applying this correspondence to the formula for $\pa \sigma$ we obtain
\begin{eqnarray}
\pa \sigma = \sum_{s=0}^r(i_0,\dots,\widehat{i_s},\dots i_r) \nonumber
&=& \sum_{J:j_0<\dots<j_{r-1}}(-1)^{j_r}(i_0,\dots,\widehat{i_{j_r}},\dots,i_r) \nonumber \\
&=& \sum_{J:j_0<\dots<j_{r-1}}(-1)^{j_r}(i_{j_0},\dots,\widehat{i_{j_{r-1}}},\dots,i_{j_r}) \nonumber
\end{eqnarray}
where the last equality holds since $(i_0,\dots,\widehat{i_{j_r}},\dots,i_r)=(i_{j_0},\dots,\widehat{i_{j_{r-1}}},\dots,i_{j_r})$. Indeed,
there exists $l$, $0\leq l\leq r-1$ such that $j_{l-1}<j_r<j_l$. In particular, it follows that $j_s=s$ for $0\leq s \leq l-1$, $j_s=s+1$ for $l \leq s\leq r-1$ and $j_r=l$.

Observe that $sign(J)$ equals to $(-1)$ raised the power of the number of transpositions that are needed to move $j_r$ to its position in $J$ which is equal to $l=j_r$. Therefore, 
$(-1)^{j_r}=sign(J)$. The second identity is proven the same way.
\end{proof}
Next, we give a proof of Proposition \ref{verify_criterion}.\\
\begin{conv}
In this subsection we use letters $J,J^1,\dots$ to denote permutations of 
$\{0,\dots,r\}$ letters $K,K^1,\dots$ to denote permutations of $\{0,\dots,r-1\}$.
Components of a permutation $J^l$ we be denoted by $(j^l_0,\dots,j^l_{r})$ ( similarly for $K^l$). 
If $\sigma$ is a simplex then we write $\sigma=(i_0(\sigma),\dots,i_r(\sigma))$ or just $(i_0,\dots,i_r)$ when there is no confusion possible.
\end{conv}
$ $\\
\textit{Proof of Proposition \ref{verify_criterion}. }
Using the formula in Lemma \ref{formula} with $m=r-1$ and noting that $\pa Sd_r(J)=\sigma^b - (i_{j_0},\dots,i_{j_{r-1}})^b$ where $\sigma^b$ is the barycenter of $\sigma$
we obtain
\begin{eqnarray}
\int_\sigma \omega &=& 
(-1)^{\lfloor \frac{r}{2} \rfloor}\sum_{J:j_0<\dots<j_{r-1}}sign(J)
\left[\xi^{r-1}_{i_{j_0},\dots,i_{j_{r-1}}}(\sigma^b) - 
\xi^{r-1}_{i_{j_0},\dots,i_{j_{r-1}}}((i_{j_0},\dots,i_{j_{r-1}})^b) \right]\nonumber \\
&+& 
(-1)^r\sum_{J:j_0<\dots<j_{r-1}}sign(J)\sum_{t=0}^{r-2}(-1)^{\lfloor \frac{t}{2} \rfloor}
\sum_{K:k_0<\dots<k_t}sign(k)\int_{Sd_{t+1}(J'_K)}\xi^t_{i_{j_{k_0}},\dots,i_{j_{k_t}}}\nonumber\\
&=&
(-1)^{\lfloor \frac{r}{2} \rfloor}\left[(\delta\xi^{r-1})_{i_0,\dots,i_r} - 
\sum_{J:j_0<\dots<j_{r-1}}sign(J) \xi^{r-1}_{i_{j_0},\dots,i_{j_{r-1}}}((i_{j_0},\dots,i_{j_{r-1}})^b) \right]\nonumber \\
&+& 
(-1)^r\sum_{J:j_0<\dots<j_{r-1}}sign(J)\sum_{t=0}^{r-2}(-1)^{\lfloor \frac{t}{2} \rfloor}
\sum_{K:k_0<\dots<k_t}sign(k)\int_{Sd_{t+1}(J'_K)}\xi^t_{i_{j_{k_0}},\dots,i_{j_{k_t}}}\nonumber\\
&=&
(-1)^{\lfloor \frac{r}{2} \rfloor}(\delta\xi^{r-1})_{i_0,\dots,i_r}\nonumber\\
&+&
(-1)^r\sum_{J:j_0<\dots<j_{r-1}}sign(J)\sum_{t=0}^{r-1}(-1)^{\lfloor \frac{t}{2} \rfloor}
\sum_{K:k_0<\dots<k_t}sign(k)\int_{Sd_{t+1}(J'_K)}\xi^t_{i_{j_{k_0}},\dots,i_{j_{k_t}}}\nonumber
\end{eqnarray}
where the second equality is obtained by application of Lemma \ref{bdry_rep}.
Set 
$$ \psi(J,\sigma,\omega):= (-1)^r\sum_{t=0}^{r-1}(-1)^{\lfloor \frac{t}{2} \rfloor}
\sum_{K:k_0<\dots<k_t}sign(k)\int_{Sd_{t+1}(J'_K)}\xi^t_{i_{j_{k_0}},\dots,i_{j_{k_t}}}. $$
So we get the following identity
$$ (-1)^{\lfloor \frac{r}{2} \rfloor}(\delta\xi^{r-1})_{i_0,\dots,i_r} = \int_\sigma \omega - 
\sum_{J:j_0<\dots<j_{r-1}}sign(J)\psi(J,\sigma,\omega). $$
Therefore,
$$
\sum_\sigma \eps_\sigma(\delta\xi^{r-1})_{i_0,\dots,i_r} =
(-1)^{\lfloor \frac{r}{2} \rfloor}\sum_\sigma \eps_\sigma \left(\int_\sigma \omega - \sum_{J:j_0<\dots<j_{r-1}}sign(J)\psi(J,\sigma,\omega)\right) 
$$
Since $a$ is a cycle and $\omega$ is an exact form it follows that $\sum_\sigma \eps_\sigma \int_\sigma \omega=\int_a \omega = 0$.
By Proposition \ref{bdry_rep} we have
\begin{equation}\label{pa_a}
0=\pa a = \sum_\sigma \eps_\sigma \sum_{J:j_0<\dots<j_{r-1}}sign(J)(i_{j_0}(\sigma),\dots,i_{j_{r-1}}(\sigma))
\end{equation}
and thus, 
\begin{equation}\label{thus1}
\sum_\sigma \eps_\sigma  \sum_{J:j_0<\dots<j_{r-1}}sign(J)\psi(J,\sigma,\omega)=0. 
\end{equation}
Formula (\ref{thus1}) can be explained as follows. We can define an $(r-1)$ co-chain $g_\psi$ as follows.
For any $(r-1)$ simplex $\tau=(i_0,\dots,i_{r-1})$ define 
$$ g_\psi(\tau) := \psi ((0,\dots,r),\sigma,\omega) $$
where $\sigma=(i_0,\dots,i_{r-1},i_r)$ is any $r$ simplex containing $\tau$ in its boundary.
Co-chain $g_\psi$ is well defined since $\psi$ depends only only on $(i_0(\sigma),\dots,i_{r-1}(\sigma))=\tau $. 
Any pair $(\sigma,J)$ where $\sigma$ is an $r$-simplex and $J$ is a permutation defines
an $(r-1)$-simplex $\tau(\sigma,J):=(i_{j_0}(\sigma),\dots,i_{j_{r-1}}(\sigma)) $.
Note that $g_\psi(\tau(\sigma,J))=\psi(J,\sigma,\omega)$.
Now, from (\ref{pa_a}) it follows that
\begin{eqnarray}\label{pa_aa}
0 &=& g_\psi(\pa a)\\ &=& \nonumber
g_{\psi}\left(\sum_\sigma \eps_\sigma  \sum_{J:j_0<\dots<j_{r-1}}sign(J)(i_{j_0}(\sigma),\dots,i_{j_{r-1}}(\sigma))\right)
\\&=&\nonumber
\sum_\sigma \eps_\sigma  \sum_{J:j_0<\dots<j_{r-1}}sign(J)g_{\psi}(\tau(\sigma,J))
\\&=&\nonumber 
\sum_\sigma \eps_\sigma  \sum_{J:j_0<\dots<j_{r-1}}sign(J)\psi(J,\sigma,\omega).
\nonumber
\end{eqnarray}

$\square$\\

We turn now to the proof of Lemma \ref{formula} which is a consequence of the next formula.
\begin{lem}\label{formula2}
In the above notation we have the following formula
\begin{eqnarray}\label{formula3}
\sum_{J:j_0< \dots <j_t} sign(J)\int_{\pa Sd_{t+1}(J)}\xi^t_{i_{j_0},\dots,i_{j_t}} &=& \\
(-1)^{t+1}\sum_{J:j_0< \dots <j_{t+1}} sign(J)\int_{\pa Sd_{t+2}(J)}\xi^{t+1}_{i_{j_0},\dots,i_{j_{t+1}}} 
&+& \nonumber \\
(-1)^{r-t}\sum_{J:j_0< \dots <j_{r-1}} sign(J)\sum_{K:k_0<\dots<k_t}sign(K)\int_{Sd_{t+1}(J'_K)}\xi^t_{i_{j_{k_0}},\dots,i_{j_{k_t}}}.\nonumber
\end{eqnarray}
\end{lem}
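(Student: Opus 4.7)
My plan is to decompose $\pa Sd_{t+1}(J) = \sum_{s=0}^{r-t}(-1)^s F_s(J)$, where $F_s(J)$ is the face obtained by omitting the vertex $(i_{j_0},\dots,i_{j_{t+s}})^b$, show that the intermediate faces cancel pairwise when summed against $\mathrm{sign}(J)$, and identify the two surviving contributions with the two right-hand side terms via Stokes' theorem together with combinatorial reindexings. Direct inspection gives $F_0(J) = Sd_{t+2}(J)$, while $F_{r-t}(J) = ((i_{j_0},\dots,i_{j_t})^b,\dots,(i_{j_0},\dots,i_{j_{r-1}})^b)$ is a $Sd_{t+1}$-type chain inside the subsimplex $(i_{j_0},\dots,i_{j_{r-1}})$. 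For each intermediate $s$ with $1\leq s\leq r-t-1$, I would use the involution $J\leftrightarrow\tilde J$ that transposes the entries $j_{t+s}$ and $j_{t+s+1}$; both lie in the free part of $J$, so this preserves the admissibility condition $j_0<\dots<j_t$. Since barycenters depend only on the underlying sets of indices, the vertices of $F_s$ at positions other than $s$ are unchanged and $F_s(\tilde J) = F_s(J)$ as oriented chains, while $\mathrm{sign}(\tilde J) = -\mathrm{sign}(J)$ and the integrand $\xi^t_{i_{j_0},\dots,i_{j_t}}$ is fixed, so these contributions cancel. Only $A_0 := \sum_{J:\,j_0<\dots<j_t}\mathrm{sign}(J)\int_{Sd_{t+2}(J)}\xi^t_{i_{j_0},\dots,i_{j_t}}$ and $(-1)^{r-t}\widetilde A$ survive, where $\widetilde A := \sum_{J:\,j_0<\dots<j_t}\mathrm{sign}(J)\int_{F_{r-t}(J)}\xi^t_{i_{j_0},\dots,i_{j_t}}$.

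To identify $A_0$ with $(-1)^{t+1}B$, where $B$ denotes the first sum on the right-hand side, I would apply Stokes' theorem and the defining relation $d\xi^{t+1}_I = (\delta\xi^t)_I$ to rewrite $B = \sum_{J:\,j_0<\dots<j_{t+1}}\mathrm{sign}(J)\sum_{u=0}^{t+1}(-1)^u\int_{Sd_{t+2}(J)}\xi^t_{i_{j_0},\dots,\widehat{i_{j_u}},\dots,i_{j_{t+1}}}$. For each fixed $u$ I would reindex by the cyclic shift $\tilde j_s = j_s$ ($s<u$), $\tilde j_s = j_{s+1}$ ($u\leq s\leq t$), $\tilde j_{t+1} = j_u$, $\tilde j_s = j_s$ ($s>t+1$). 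This shift preserves $Sd_{t+2}$ as an oriented chain (the relevant vertex sets $\{\tilde j_0,\dots,\tilde j_{u'}\}$ are unchanged for each $u' \geq t+1$), rewrites the integrand as $\xi^t_{i_{\tilde j_0},\dots,i_{\tilde j_t}}$, ensures $\tilde j_0<\dots<\tilde j_t$, and multiplies $\mathrm{sign}$ by $(-1)^{t+1-u}$ (the sign of a cyclic permutation of $t+2-u$ elements). The combined factor $(-1)^u(-1)^{t+1-u} = (-1)^{t+1}$ is independent of $u$, and the assignment $(J,u) \mapsto \tilde J$ is a bijection from $\{J : j_0<\dots<j_{t+1}\}\times\{0,\dots,t+1\}$ onto $\{\tilde J : \tilde j_0<\dots<\tilde j_t\}$; hence $B = (-1)^{t+1}A_0$.

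To identify $\widetilde A$ with $C$, the last sum on the right-hand side, I would parametrize each admissible $\tilde J$ as a pair $(J,K)$ with $j_r := \tilde j_r$, $J' = (j_0,\dots,j_{r-1})$ the increasing rearrangement of $\{0,\dots,r\}\setminus\{j_r\}$, and $K$ the unique permutation of $\{0,\dots,r-1\}$ with $k_0<\dots<k_t$ satisfying $J'_K = (\tilde j_0,\dots,\tilde j_{r-1})$. Under this bijection $F_{r-t}(\tilde J) = Sd_{t+1}(J'_K)$, the integrand becomes $\xi^t_{i_{j_{k_0}},\dots,i_{j_{k_t}}}$, and $\mathrm{sign}(\tilde J) = \mathrm{sign}(J)\,\mathrm{sign}(K)$ since $\tilde J$ is obtained from $J$ by applying $K$ to its first $r$ entries. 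Combining with the boundary sign $(-1)^{r-t}$ yields $(-1)^{r-t}C$, completing the identity. The main obstacle will be the sign bookkeeping in these two reindexings --- in particular verifying the cyclic-shift sign $(-1)^{t+1-u}$ and the factorization $\mathrm{sign}(\tilde J) = \mathrm{sign}(J)\,\mathrm{sign}(K)$ --- together with checking that the intermediate faces $F_s$ for $1\leq s\leq r-t-1$ really pair up as oriented chains (not merely as point sets) under the swap involution.
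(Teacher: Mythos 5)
Your proposal is correct and follows essentially the same route as the paper's own proof: the same boundary decomposition of $\pa Sd_{t+1}(J)$, the same pairwise cancellation of intermediate faces via the transposition involution, the same reindexing bijection (the paper's $(J,s)\mapsto (J_s)$ with sign $(-1)^{t-s+1}$) combined with $d\xi^{t+1}=\delta\xi^t$ and Stokes for the first surviving term, and the same correspondence $(J,K)\mapsto (J'_K,j_r)$ with $sign(J'_K,j_r)=sign(J)sign(K)$ for the last term. The sign conventions you anticipate as the main obstacles are exactly the ones the paper verifies, and they work out as you describe.
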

\begin{proof}
Let $J=(j_0,\dots,j_r)$ be fixed with $j_0<\dots<j_t$.  
Set 
$$ \alpha(J,t,s) := \left((i_{j_0},\dots,i_{j_t})^b,\dots,\widehat{(i_{j_0},\dots,i_{j_s})^b},\dots
(i_{j_0},\dots,i_{j_r})^b \right), $$
where $t\leq s \leq r$.
We may represent 
$$ \pa Sd_{t+1}(J) = \sum_{s=t}^{r} (-1)^{s-t} \alpha (J,t,s) $$
and thus we may write the left hand side of (\ref{formula2}) as
\begin{eqnarray}\label{formula4}
\sum_{J:j_0< \dots <j_t} sign(J)\int_{\pa Sd_{t+1}(J)}\xi^t_{i_{j_0},\dots,i_{j_t}} &=&\\
\sum_{J:j_0< \dots <j_t} sign(J)\sum_{s=t}^{r}(-1)^{s-t} \int_{\alpha(J,t,s)}\xi^t_{i_{j_0},\dots,i_{j_t}}.
\nonumber
\end{eqnarray}
Note that when $t<s<r$ there exist exactly two different permutations $J^{(1)}=J=(j_0,\dots,j_r)$ and
$J^{(2)}=(j_0,\dots,j_{s-1},j_{s+1},j_s,j_{s+2},\dots,j_r)$ such that
$\alpha(J^{(1)},t,s)=\alpha(J^{(2)},t,s)$ also observe that $sign(J^{(1)})=-sign(J^{(2)})$.
It follows that 
\begin{eqnarray}
\sum_{J:j_0< \dots <j_t} sign(J)\sum_{s=t}^{r}(-1)^s \int_{\alpha(J,t,s)}\xi^t_{i_{j_0},\dots,i_{j_t}}
&=&\nonumber \\
\sum_{J:j_0< \dots <j_t} sign(J)\left(\int_{\alpha(J,t,t)}\xi^t_{i_{j_0},\dots,i_{j_t}} 
+
(-1)^{r-t}\int_{\alpha(J,t,r)}\xi^t_{i_{j_0},\dots,i_{j_t}}
\right) 
&=&\nonumber\\
\sum_{J:j_0< \dots <j_t} sign(J)\left(\int_{Sd_{t+2}(J)}\xi^t_{i_{j_0},\dots,i_{j_t}} 
+
(-1)^{r-t}\int_{Sd_{t+1}(J')}\xi^t_{i_{j_0},\dots,i_{j_t}}
\right).
\nonumber
\end{eqnarray}
First we show that
\begin{eqnarray}\label{formula_part_1}
\sum_{J:j_0< \dots <j_t} sign(J)\int_{Sd_{t+2}(J)}\xi^t_{i_{j_0},\dots,i_{j_t}} 
&=&\\
(-1)^{t+1}\sum_{J:j_0< \dots <j_{t+1}} sign(J)\int_{\pa Sd_{t+2}(J)}\xi^{t+1}_{i_{j_0},\dots,i_{j_{t+1}}}.
\nonumber
\end{eqnarray}
If $t$ is fixed and  $J=(j_0,\dots,j_r)$ is a permutation denote by $(J_s)$ the permutation $\left( (J_s)_0,\dots,(J_s)_r \right) $ where
$$
(J_s)_l = 
\left\{\begin{array}{rl}
 j_l     &\mbox{ if $0\leq l\leq s-1$} \\
 j_{l+1} &\mbox{ if $s \leq l \leq t$} \\
 j_s     &\mbox{ if $l=t+1$ }\\
 j_l     &\mbox{ if $t+2 \leq l \leq r$} 
\end{array} \right. ,
$$
where $0 \leq s\leq t+1$. Permutation $(J_s)$ is obtained from $J$ by removing $j_s$ and replacing it between $j_{t+1}$ and $j_{t+2}$. Note that if $s=t+1$ then $(J_s)=J$.
Observe that 
\begin{equation}\label{sdjs}
Sd_{t+2}(J)=Sd_{t+2}((J_s)) 
\end{equation}
since 
$\{(J_s)_0,\dots,(J_s)_{t+1} \}=\{j_0,\dots,j_{t+1} \} $.
There exists a 1:1 and onto correspondence 
$$f:\{(J,s): s=0,\dots,t+1, \ \ \ j_0<\dots<j_{t+1} \} \to \{ J : j_0<\dots<j_t \}, $$
given by $(J,s)\mapsto (J_s)$.

First we show that $f$ is $1:1$. Suppose that $(J^1,s^1)$ and $(J^2,s^2)$ are two different elements that are mapped to $\tilde{J}=(\tilde{j}_0,\dots,\tilde{j}_r)$. So, $\tilde{j}_{t+1} = j^1_{s^1}=j^2_{s^2} $. But since $\tilde{j}_0<\dots<\tilde{j}_r$ there exists unique $l$, $0\leq l\leq t$, such that $\tilde{j}_l<\tilde{j}_{t+1}<\tilde{j}_{l+1}$ if $l<t$ and $\tilde{j}_t<\tilde{j}_{t+1}$ 
if $l=t$. That is, there exists a unique place between the elements of the sequence $\tilde{j}_0,\dots,\tilde{j}_t$ where $\tilde{j}_{t+1}$ can be squeezed to form an ordered sequence.
It follows that $(J^1,s^1)=(J^2,s^2)$.

The correspondence $f$ is onto. Let $J=(j_0,\dots,j_r)$, $j_0,\dots,j_t$. If $j_{t+1}>j_t$ then set $s:=t+1$. Otherwise, there exists a unique $s$ such that $j_{s-1}<j_{t+1}<j_s$.
Therefore, $J = f(\tilde{J},s)$ where 
$$
\tilde{J}_l = 
\left\{\begin{array}{rl}
 j_l     &\mbox{ if $0\leq l\leq s-1$} \\
 j_{t+1}     &\mbox{ if $l=s$} \\
 j_{l-1} &\mbox{ if $s+1 \leq l \leq t+1$} \\
 j_l     &\mbox{ if $t+2 \leq l \leq r$} 
\end{array} \right. .
$$
Permutation $\tilde{J}$ is obtained from $J$ by swapping $j_s$ with $j_{t+1}$.
Next we use correspondence $f$ in the left hand side of formula (\ref{formula_part_1})
\begin{eqnarray}
\sum_{J:j_0< \dots <j_t} sign(J)\int_{Sd_{t+2}(J)}\xi^t_{i_{j_0},\dots,i_{j_t}} 
&=&\nonumber\\
\sum_{J:j_0< \dots <j_{t+1}}\sum_{s=0}^{t+1} sign((J_s))\int_{Sd_{t+2}((J_s))}\xi^t_{i_{j_0},\dots\hat{i}_{j_s}\dots,i_{j_{t+1}}} 
&=&\nonumber\\
\sum_{J:j_0< \dots <j_{t+1}}\sum_{s=0}^{t+1} sign(J) (-1)^{t-s+1}\int_{Sd_{t+2}(J)}\xi^t_{i_{j_0},\dots\hat{i}_{j_s}\dots,i_{j_{t+1}}} 
&=&\nonumber\\
(-1)^{t+1}\sum_{J:j_0< \dots <j_{t+1}} sign(J) \int_{Sd_{t+2}(J)}(\delta\xi^t)_{i_{j_0},\dots,i_{j_{t+1}}} 
&=&\nonumber\\
(-1)^{t+1}\sum_{J:j_0< \dots <j_{t+1}} sign(J) \int_{\pa Sd_{t+2}(J)}\xi^{t+1}_{i_{j_0},\dots,i_{j_{t+1}}}. 
\nonumber
\end{eqnarray}
where in the second equality we used equation (\ref{sdjs}) and $sign((j_s))=(-1)^{t-s+1}sign(J)$ since it takes $t-s+1$ transpositions to transform $J$ into $(J_s)$. 
In the third equality we used the definition of $\delta \xi^t$ and in the last equality we used Stokes formula after using a relation defining $\xi^{t+1}$ (see Definition \ref{def_xi}).

Next we prove 
\begin{eqnarray}\label{formula_part_2}
\sum_{J:j_0< \dots <j_t} sign(J)\int_{Sd_{t+1}(J')}\xi^t_{i_{j_0},\dots,i_{j_t}}
&=&\\
\sum_{J:j_0< \dots <j_{r-1}} sign(J)\sum_{K:k_0<\dots<k_t}sign(K)\int_{Sd_{t+1}(J'_K)}\xi^t_{i_{j_{k_0}},\dots,i_{j_{k_t}}}.
\nonumber
\end{eqnarray}
There exists a 1:1 and onto correspondence
$$h: \{(J,K): j_0<\dots<j_{r-1},\ \ k_0<\dots<k_t  \}\to\{J: j_0<\dots<j_{t}\}, $$
defined by $(J,K)\mapsto (j_{k_0},\dots,j_{k_{r-1}},j_r )=(J'_K,j_r)$.\\
{\it $h$ is 1:1.} Suppose that $(J^1,K^1)$ and $(J^2,K^2)$ are mapped to 
$(j_0,\dots,j_r)$. Then we must have $j_r=j^1_r=j^2_r$. It follows that 
$\{j^1_0,\dots,j^1_{r-1} \}=\{j^2_0,\dots,j^2_{r-1} \}$. But since 
$j^l_0<\dots<j^l_{r-1}$ for $l=1,2$ we must have $J^1=J^2$.
Since $K^1$ and $K^2$ are permutations, $(J^1)'_{K^1}=(J^2)'_{K^2}$ and $(J^l)'$, $l=1,2$ have distinct elements it follows that $K^1=K^2$.\\
{\it $h$ is onto.} Let $J$, $j_0<\dots<j_t$ be a permutation.
Set $J^1$ to be a permutation with $j^1_0<\dots<j^1_{r-1}$, $j^1_r:=j_r$ and 
$\{j_0,\dots,j_{r-1} \}=\{j^1_0,\dots,j^1_{r-1} \}$.
There exists a permutation $K$ of $\{0,\dots,r-1 \}$ such that $j_i=j^1_{k_i}$.
Now, $(J^1,K)\mapsto J$.

We apply correspondence $h$ to left hand side of (\ref{formula_part_2}) obtaining
\begin{eqnarray}
\sum_{J:j_0< \dots <j_t} sign(J)\int_{Sd_{t+1}(J')}\xi^t_{i_{j_0},\dots,i_{j_t}}
&=&\nonumber\\
\sum_{J:j_0< \dots <j_{r-1}}\sum_{K:k_0< \dots <k_t} sign(J'_K,j_r)\int_{Sd_{t+1}(J'_K)}\xi^t_{i_{j_{k_0}},\dots,i_{j_{k_t}}}.
\nonumber
\end{eqnarray}
Finally, note that $sign(K)sign(J)=sign(J'_K,j_r)$ and the lemma is proven.
\end{proof}

We are ready now to prove Lemma \ref{formula}.\\
{\it Proof of Lemma \ref{formula}.} 
The proof is by induction on $m$. For $m=0$ we have
\begin{eqnarray}
\int_{\sigma}\omega = \sum_J sign(J)\int_{Sd_1(J)} d\xi^{0}_{i_{j_0}}= 
\sum_J sign(J)\int_{\pa Sd_1(J)} \xi^{0}_{i_{j_0}}.\nonumber
\end{eqnarray}
Suppose that $m>0$. Induction hypothesis for $m-1$ followed by application of Lemma \ref{formula2} yields 
\begin{eqnarray}
\int_\sigma \omega &=& (-1)^{\lfloor \frac{m}{2} \rfloor}\sum_{J:j_0<\dots<j_{m-1}}sign(J)
\int_{\pa Sd_{m}(J)}\xi^{m-1}_{i_{j_0},\dots,i_{j_{m-1}}}\nonumber \\ 
&+& 
(-1)^r\sum_{J:j_0<\dots<j_{r-1}}sign(J)\sum_{t=0}^{m-2}(-1)^{\lfloor \frac{t}{2} \rfloor}
\sum_{K:k_0<\dots<k_t}sign(k)\int_{Sd_{t+1}(J'_K)}\xi^t_{i_{j_{k_0}},\dots,i_{j_{k_t}}}\nonumber\\
&=&
(-1)^{\lfloor \frac{m}{2} \rfloor}
\left[ 
(-1)^{m}\sum_{J:j_0< \dots <j_{m}} sign(J)\int_{\pa Sd_{m+1}(J)}\xi^{m}_{i_{j_0},\dots,i_{j_{m}}} 
\right. \nonumber \\ &+&
\left.
(-1)^{r-(m-1)}\sum_{J:j_0< \dots <j_{r-1}} sign(J)\sum_{K:k_0<\dots<k_{m-1}}sign(K)\int_{Sd_{m}(J'_K)}\xi^t_{i_{j_{k_0}},\dots,i_{j_{k_{m-1}}}}\nonumber
\right] \nonumber \\
&+& 
\sum_{J:j_0<\dots<j_{r-1}}sign(J)\sum_{t=0}^{m-2}(-1)^{\lfloor \frac{t}{2} \rfloor}
\sum_{K:k_0<\dots<k_t}sign(k)\int_{Sd_{t+1}(J'_K)}\xi^t_{i_{j_{k_0}},\dots,i_{j_{k_t}}}\nonumber\\
&=&
(-1)^{\lfloor \frac{m+1}{2} \rfloor}
\sum_{J:j_0< \dots <j_{m}} sign(J)\int_{\pa Sd_{m+1}(J)}\xi^{m}_{i_{j_0},\dots,i_{j_{m}}} 
\nonumber \\ 
&+&
(-1)^r\sum_{J:j_0<\dots<j_{r-1}}sign(J)\sum_{t=0}^{m-1}(-1)^{\lfloor \frac{t}{2} \rfloor}
\sum_{K:k_0<\dots<k_t}sign(k)\int_{Sd_{t+1}(J'_K)}\xi^t_{i_{j_{k_0}},\dots,i_{j_{k_t}}},\nonumber
\end{eqnarray}
where the last equality is since the parity of ${\lfloor \frac{m}{2} \rfloor}+m =$ parity of 
${\lfloor \frac{m+1}{2} \rfloor}$ and the parity of $(m-1)+{\lfloor \frac{m}{2} \rfloor}=$
parity of ${\lfloor \frac{m-1}{2} \rfloor}$
and the Lemma is proven. $\square$
%
%
%
%
%
%
%
%
\section{$L^{\overline p}$-cohomology}\label{Lp_cohomology}
In this section we study an $L^{\overline p}$ complex, introduced in [GoTr].
We show that that Poincar\'e inequality for differential forms on a manifold $X$, implies 
an embedding of $L^{\overline p}$-cohomology of $X$ into the standard De Rham cohomology.
The main tool we use here is a method developed by B. Yousin [Y] to regularize the forms.

\begin{df}
Let $\overline p=(p_0,\dots,p_n)$, $1\leq p_i\leq\infty$, be an $n$-tuple of numbers. 
Set 
$$ \Omega^k_{\overline p}(X):=\{ \omega \in L^{p_k}(\Omega^k): d\omega\in L^{p_{k+1}}(\Omega^{k+1}) \}. $$ 
The complex $(\Omega^{\bullet}_{\overline p}(X),\overline d)$ where $\overline d$ is the weak exterior derivative,
is called {\bf $L^{\overline p}$ De Rham complex}.

The cohomology ring of $(\Omega^{\bullet}_{\overline p}(X),\overline d)$ is denoted by $H_{\overline p}^\bullet(X)$ and 
called  {\bf $L^{\overline p}$ cohomology} of $X$.

We say that the complex $\Omega^{\bullet}_{\overline p}(X)$ satisfies generalized Poincar\'e inequality if
there exist positive constants $C_k$ such that for each $k$-form $\omega$ such that $d\omega\in L^{p_{k+1}}$ there exists a closed 
$k$-form $\omega_0$
such that
$$ \| \omega -\omega_0\|_{p_k}\leq C_k \|d\omega \|_{p_{k+1}} .$$
\end{df}

\begin{thm}\label{Y_1}([Y] Theorem 2.7.1)
Let $X$ be a smooth Riemaniann manifold and $\omega\in L^p(X)$ a $k+1$-form with $\overline d\omega\in \C^\infty(X)$.
Then for any $\eps>0 $ there exists a $k$-form $\psi_\eps$ such that
\begin{enumerate}
\item $\|\psi_\eps\|_{p}<\eps$, 
\item $\|\overline d\psi_\eps\|_{p}<\eps$,
\item $\omega+ \overline d\psi_\eps $ is smooth.
\end{enumerate}
\end{thm}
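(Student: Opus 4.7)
The approach is a \emph{localize--mollify--patch} strategy combining classical Friedrichs mollification with the averaged homotopy operator underlying the local Poincar\'e inequality (Theorem \ref{loc_Poincare_ineq}). The idea is that $\omega$ fails to be smooth only in directions transverse to $\overline d$, so mollifying locally produces a smooth form that differs from $\omega$ by a residual whose $\overline d$ is also small, and the Poincar\'e homotopy then gives a small primitive of that residual.

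First I would fix a locally finite good cover $\{U_i\}$ of $X$ by convex coordinate charts (existing by the remark after Definition \ref{cech_complex}) and a subordinate partition of unity $\{\rho_i\}$. On each chart $U_i$, identified with a convex set in $\R^n$, mollify $\rho_i \omega$ by convolution with a standard mollifier at scale $\delta_i$ to obtain a compactly supported smooth form $\omega_i^{\delta_i}$. Because $\overline d \omega \in C^\infty(X)$, we have $\overline d(\rho_i \omega) = d\rho_i \wedge \omega + \rho_i\, \overline d \omega \in L^p(U_i)$, so standard mollifier estimates yield that both $\|\rho_i \omega - \omega_i^{\delta_i}\|_{L^p(U_i)}$ and $\|\overline d(\rho_i \omega) - \overline d \omega_i^{\delta_i}\|_{L^p(U_i)}$ tend to $0$ as $\delta_i \to 0$.

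Next, for the residual $(k+1)$-form $\alpha_i := \rho_i \omega - \omega_i^{\delta_i}$ on the convex chart $U_i$, I would apply the averaged homotopy operator $A = A_{U_i}$ from the proof of Theorem \ref{loc_Poincare_ineq}. The identity $\alpha_i = \overline d(A \alpha_i) + A(\overline d \alpha_i)$, together with the Poincar\'e estimate $\|A\eta\|_{L^p(U_i)} \lesssim \|\eta\|_{L^q(U_i)}$ (following from H\"older plus Proposition \ref{lem311}), gives a $k$-form $\psi_{i,\eps} := A \alpha_i$ with $\|\psi_{i,\eps}\|_{L^p(U_i)}$ and $\|\overline d \psi_{i,\eps}\|_{L^p(U_i)}$ arbitrarily small for $\delta_i$ small enough. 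Writing $\psi_\eps := \sum_i \psi_{i,\eps}$ (a locally finite sum) produces a globally defined $k$-form satisfying $\omega + \overline d\psi_\eps = \sum_i \omega_i^{\delta_i} + \sum_i A(\overline d \alpha_i)$, where the first sum is smooth and the second is a small remainder.

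The main obstacle is that a single pass leaves a remainder of small but nonzero $L^p$ norm, so the corrected form is \emph{nearly} smooth rather than smooth. I would overcome this by iterating: apply the same procedure to the remainder $\sum_i A(\overline d \alpha_i)$ (which is again an $L^p$-form with smooth exterior derivative) at the next round, with mollification scales $\delta_i^{(n)}$ chosen so the resulting $L^p$ norms decay geometrically in $n$. Summing the telescoping corrections $\psi_\eps := \sum_n \sum_i \psi_{i,\eps}^{(n)}$ gives convergence of both $\|\psi_\eps\|_p$ and $\|\overline d \psi_\eps\|_p$ to quantities bounded by $\eps$, while the locally uniform convergence of the partially corrected forms (arranged by the rapid decay of $\delta_i^{(n)}$) ensures the limit $\omega + \overline d \psi_\eps$ is genuinely in $C^\infty(X)$.
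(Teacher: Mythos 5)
The paper itself does not prove Theorem \ref{Y_1}: it is quoted from Youssin ([Y], Theorem 2.7.1), so your argument has to stand entirely on its own. The first pass of your scheme is sound in outline: with $\alpha_i:=\rho_i\omega-\omega_i^{\delta_i}$ one gets $\omega+\overline d\psi_\eps=\sum_i\omega_i^{\delta_i}+\sum_i A(\overline d\alpha_i)$, the first sum smooth and the second small in $L^p$ (after checking that the homotopy identity $\eta=\overline dA\eta+A\overline d\eta$ and the bound $\|A\eta\|_{L^p(U_i)}\lesssim\|\eta\|_{L^p(U_i)}$ coming from Proposition \ref{lem311} extend from smooth forms to $L^p$ forms with $L^p$ weak differential, and modulo uniformity of the local Poincar\'e constants over a locally finite cover of a noncompact $X$). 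The genuine gap is in the iteration. Write $S^{(n+1)}:=\sum_i(\rho_iR^{(n)})^{\delta^{(n)}}$ for the smooth part produced at stage $n$, so that $\omega+\overline d\psi_\eps=\sum_n S^{(n)}$ once the remainders $r_n:=\|R^{(n)}\|_p$ tend to $0$. For the limit to be smooth you need $\sum_n S^{(n)}$ to converge in $C^\infty_{loc}$; locally uniform convergence, which is all you invoke, gives only a continuous limit. The mollifier estimate gives $\|S^{(n+1)}\|_{C^k(K)}\lesssim(\delta^{(n)})^{-k-\dim X/p}\,r_n$, which forces a \emph{lower} bound on $\delta^{(n)}$ in terms of $r_n$; but making $r_{n+1}$ summably small forces an \emph{upper} bound on $\delta^{(n)}$ governed by the modulus of $L^p$-continuity of $R^{(n)}$, which is not controlled by $r_n$ alone (an $L^p$ form of tiny norm can require an arbitrarily small mollification scale). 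Nothing in the proposal reconciles these two competing constraints, and for generic data they are incompatible: the ``rapid decay of $\delta_i^{(n)}$'' you appeal to is exactly what destroys summability of the $C^k$ norms.

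The standard way to close this gap --- and, as far as I can tell, what Youssin's argument and de Rham's classical regularization do --- is to avoid the global iteration altogether. One builds operators $R$ and $A$ as a locally finite composition and sum of chartwise convolution--homotopy pairs satisfying $R-\mathrm{id}=\overline dA+A\overline d$, where $R$ is genuinely globally smoothing, $A$ maps smooth forms to smooth forms, and both have $L^p$ operator norms tending to $1$ and $0$ respectively as the scales shrink. Then $\psi_\eps:=A\omega$ works in a single pass: $\omega+\overline dA\omega=R\omega-A\overline d\omega$ is smooth because $\overline d\omega$ is smooth and $A$ preserves smoothness, while conditions (1) and (2) follow from the operator norm bounds. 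The essential structural difference from your construction is that the rough term $d\rho_i\wedge\omega$ is never fed to the homotopy operator as an un-smoothed input; in your scheme it is (inside $\overline d\alpha_i$), which is precisely what produces a non-smooth remainder and forces the divergent iteration. I would either adopt this operator formalism or supply a quantitative argument showing the two constraints on $\delta^{(n)}$ can be met simultaneously; as written, the smoothness of $\omega+\overline d\psi_\eps$ is not established.
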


\begin{cor}\label {cor_Y}
Let $\Omega_{\overline p}^{\bullet}(X)$ be an $L^{\overline p}$ complex that satisfies Poincar\'e inequality.
Suppose that $\omega\in\Omega_{\overline p}^{k+1}$ and $d\omega$ are smooth on $X$. 
Then, there exists a $k$-form $\psi$ such that 
\begin{enumerate}
\item $\|\psi\|_{{p_{k}}}<\infty$, 
\item $\|\overline d\psi\|_{{p_{k+1}}}<\infty$,
\item $\omega+ \overline d\psi $ is smooth.
\end{enumerate}
\end{cor}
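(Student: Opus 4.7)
The plan is to combine Yousin's smoothing Theorem \ref{Y_1} with the generalized Poincar\'e inequality assumed on $\Omega_{\overline p}^{\bullet}(X)$. Yousin's construction yields an a priori $L^{p_{k+1}}$ control on both the smoothing form and its differential, but the corollary demands an $L^{p_k}$ bound on the form itself. The Poincar\'e inequality is precisely the bridge, converting $L^{p_{k+1}}$ control on a differential into $L^{p_k}$ control on the underlying form after subtracting a closed correction.

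Concretely, I would first apply Theorem \ref{Y_1} to the $(k+1)$-form $\omega \in L^{p_{k+1}}(X)$, whose weak exterior derivative $\overline{d}\omega$ is smooth by hypothesis, with any fixed $\varepsilon>0$ (the particular value is irrelevant for this qualitative statement). This produces a $k$-form $\psi_\varepsilon$ satisfying $\|\psi_\varepsilon\|_{p_{k+1}}<\varepsilon$, $\|\overline{d}\psi_\varepsilon\|_{p_{k+1}}<\varepsilon$, and $\omega+\overline{d}\psi_\varepsilon$ smooth. Observe that $\psi_\varepsilon$ is not a priori in $L^{p_k}(X)$; this is exactly the deficiency that must be corrected.

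Next, I would invoke the generalized Poincar\'e inequality for the $k$-form $\psi_\varepsilon$. Its hypothesis only requires $\overline{d}\psi_\varepsilon \in L^{p_{k+1}}$, which Yousin delivers, so there exists a closed $k$-form $\eta_0$ with
\[
\|\psi_\varepsilon - \eta_0\|_{p_k}\leq C_k\,\|\overline{d}\psi_\varepsilon\|_{p_{k+1}}<\infty.
\]
Set $\psi:=\psi_\varepsilon-\eta_0$. Since $\eta_0$ is weakly closed, $\overline{d}\psi=\overline{d}\psi_\varepsilon$, so $\omega+\overline{d}\psi=\omega+\overline{d}\psi_\varepsilon$ is smooth and $\|\overline{d}\psi\|_{p_{k+1}}=\|\overline{d}\psi_\varepsilon\|_{p_{k+1}}<\infty$, while the Poincar\'e estimate yields $\|\psi\|_{p_k}<\infty$. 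All three conclusions then follow simultaneously.

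I do not anticipate a serious obstacle. The only point requiring care is to confirm that $\psi_\varepsilon$, produced by Yousin's smoothing, is a legitimate input to the generalized Poincar\'e inequality; this is automatic because that inequality imposes no $L^{p_k}$ assumption on its input form, only on its differential. The remainder is a routine verification that the closed correction $\eta_0$ does not disturb smoothness of $\omega+\overline{d}\psi$.
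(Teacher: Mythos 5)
Your argument is correct and coincides with the paper's own proof: both apply Theorem \ref{Y_1} to produce a smoothing form with $L^{p_{k+1}}$-bounded differential, then use the generalized Poincar\'e inequality to subtract a closed $k$-form and obtain the required $L^{p_k}$ bound, noting that the closed correction does not change $\overline d\psi$. No gaps.
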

\begin{proof}

By Theorem \ref{Y_1} there exists $\psi_0$ such that 
$\|\psi_0 \|_{p_{k+1}}<\infty$, $\|\dbar \psi_0 \|_{p_{k+1}}<\infty$ and $\omega+\dbar \psi_0$ is smooth. By generalized Poincar\'e inequality there exists a closed $k$-form $\psi_1$ such that 
$$ \| \psi_0 - \psi_1 \|_{{p_{k}}}\lesssim \| \dbar \psi_0 \|_{{p_{k+1}}}.$$
Set $$ \psi:=\psi_0 -\psi_1. $$ 
Now,
$\dbar \psi = \dbar \psi_0 $ so $\omega+\dbar \psi$ is smooth.
We also have
$$\| \psi \| _{{p_k}} =\| \psi_0 - \psi_1 \|_{{p_{k}}}\lesssim \| \dbar\psi_0 \|_{{p_{k+1}}}<\infty . $$
\end{proof}

\begin{thm}\label{inc_thm}
There is a natural inclusion $i:H_{\overline p}^\bullet(X)\to H_{DR}^\bullet(X)$ where $H_{DR}^\bullet(X)$ is the De Rham cohomology group. 
\end{thm}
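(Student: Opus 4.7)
The plan is to realize every $L^{\overline p}$-cohomology class by a smooth closed representative using the regularization afforded by Corollary~\ref{cor_Y}, and then to show that the resulting assignment descends to a well-defined injection $i\colon H^\bullet_{\overline p}(X) \to H^\bullet_{DR}(X)$.

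First, given a closed form $\omega\in\Omega^k_{\overline p}(X)$, the weak differential $\overline d\omega=0$ is trivially smooth, so Corollary~\ref{cor_Y} produces $\psi\in\Omega^{k-1}_{\overline p}(X)$ with $\tilde\omega:=\omega+\overline d\psi$ smooth. Since $d\tilde\omega=\overline d\tilde\omega=\overline d\omega=0$, the form $\tilde\omega$ is De Rham closed, and I define $i([\omega]_{\overline p}):=[\tilde\omega]_{DR}$.

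To verify independence of the auxiliary form $\psi$, suppose $\psi_1,\psi_2$ both yield smooth representatives $\tilde\omega_1,\tilde\omega_2$ of $\omega$. Their difference $\eta:=\tilde\omega_1-\tilde\omega_2=\overline d(\psi_1-\psi_2)$ is smooth, while $\psi_1-\psi_2\in\Omega^{k-1}_{\overline p}$ has smooth weak differential. A second application of Corollary~\ref{cor_Y}, this time to $\psi_1-\psi_2$, yields a $(k-2)$-form $\gamma$ with $\beta:=(\psi_1-\psi_2)+\overline d\gamma$ smooth, whence $d\beta=\overline d\beta=\overline d(\psi_1-\psi_2)=\eta$ exhibits $\eta$ as De Rham-exact, so $[\tilde\omega_1]_{DR}=[\tilde\omega_2]_{DR}$. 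Independence of the chosen cocycle representative is analogous: replacing $\omega$ by $\omega+\overline d\theta$ with $\theta\in\Omega^{k-1}_{\overline p}$ and taking $\psi-\theta$ as the auxiliary form produces the same $\tilde\omega$. For injectivity, if $i([\omega])=0$ then $\tilde\omega=d\alpha$ for some smooth $(k-1)$-form $\alpha$, whence $\omega=\overline d(\alpha-\psi)$; provided $\alpha\in L^{p_{k-1}}(X)$ (automatic on a compact $X$), this shows $[\omega]_{\overline p}=0$.

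The main obstacle is the well-definedness step: even when $\eta=\overline d(\psi_1-\psi_2)$ is smooth, $\psi_1-\psi_2$ itself need not be, so exhibiting $\eta$ as the De Rham differential of a smooth form forces a second application of Corollary~\ref{cor_Y} one degree lower. The low-degree cases $k=0$ and $k=1$ must be treated separately, since the corollary requires a nontrivial target degree; closed $0$-forms are locally constant (hence smooth), and the comparison for $1$-forms follows from elliptic regularity of $\overline d$ on functions with smooth weak differential. A secondary technical point is the finite-norm condition on the primitive $\alpha$ in the injectivity step, which is automatic on a compact manifold but requires an additional smoothing argument in the non-compact setting.
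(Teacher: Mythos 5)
Your construction of $i$ and the well-definedness argument match the paper's: you regularize a closed $\omega\in\Omega^k_{\overline p}$ via Corollary \ref{cor_Y}, and you handle independence of the auxiliary form by a second application of that corollary one degree lower, exhibiting $\overline d(\psi_1-\psi_2)$ as $d\beta$ for a smooth $\beta$. Your treatment of independence of the cocycle representative (absorbing $\overline d\theta$ into the auxiliary form $\psi-\theta$) is in fact slightly cleaner than the paper's version, which first reduces to smooth representatives and then regularizes the primitive $\gamma$ of $\omega-\omega''$.

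The genuine gap is in the injectivity step. You write $\tilde\omega=d\alpha$ for a smooth De Rham primitive $\alpha$ and conclude $\omega=\overline d(\alpha-\psi)$, but this only kills the class in $H^\bullet_{\overline p}$ if $\alpha-\psi\in\Omega^{k-1}_{\overline p}$, i.e.\ if $\alpha\in L^{p_{k-1}}(X)$ --- a condition you correctly note is automatic only for compact $X$ and then leave unresolved (``requires an additional smoothing argument''). But the theorem is stated for a general manifold $X$ on which the complex $\Omega^\bullet_{\overline p}(X)$ satisfies the generalized Poincar\'e inequality (compactness is only invoked in the subsequent corollary, to get surjectivity), and the missing ingredient is not a smoothing argument: it is precisely that generalized Poincar\'e inequality, which is the standing hypothesis of the section and the whole point of the theorem. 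The paper's injectivity argument runs: if the smooth representative is $d\gamma$ with $\gamma$ smooth, the generalized Poincar\'e inequality supplies a \emph{closed} form $\gamma_0$ with $\|\gamma-\gamma_0\|_{p_{k-1}}\lesssim\|\omega\|_{p_k}<\infty$, so $\gamma-\gamma_0\in\Omega^{k-1}_{\overline p}$ and $d(\gamma-\gamma_0)=\omega$, hence $[\omega]_{\overline p}=0$. Without this step your argument proves injectivity only for compact $X$, where the statement is much weaker; you should replace the integrability proviso on $\alpha$ by an appeal to the generalized Poincar\'e inequality.
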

\begin{proof}
First we show existence of a map $i:H_{\overline p}^\bullet(X)\to H^\bullet(X)$.
Let $\omega$ be a closed form in $\Omega_{\overline p}^{k+1}(X)$ then by Corollary \ref{cor_Y} there exits
$\psi$ such that $\alpha:=\omega+\dbar\psi$ is smooth, $\|\psi\|_{p_k}<\infty$ and $\|d\psi\|_{p_{k+1}}<\infty$. So $\alpha$ is smooth and defines the same cohomology class as $\omega$. Set $i[\omega]$ to be the cohomology class of $\alpha$ in $H^{k+1}(X)$.
We claim that the map $i$ is well defined. First we show that $i$ is independent of choice of the form $\psi$. Suppose that $\psi'$ is another form such that $\alpha':=\omega+\dbar\psi'$ is smooth, $\|\psi'\|_{p_k}<\infty$ and $\|d\psi'\|_{p_{k+1}}<\infty$. 
We have 
$$\alpha-\alpha'=\dbar(\psi-\psi') $$
Since $\dbar(\psi-\psi')$ is smooth, there exists a form $\xi\in\Omega^{k-1}_{\overline p}$ such that $\psi-\psi'+d\xi$ is smooth and therefore, $\alpha-\alpha'=d(\psi-\psi'+d\xi)$.

Next, we show that $i$ is independent of the representative $\omega$. Suppose that $\omega''$ is another form from the cohomology class of $\omega$ in $H_{\overline p}^{k+1}(X)$. 
Since we proved independence from $\psi$, we may assume that $\omega$ and $\omega''$ are smooth.
Then there exists a form  $\gamma\in\Omega^{k}_{\overline p}$ such that $\omega-\omega''=d\gamma$.
Since $d\gamma$ is smooth we may find a form $\beta$ such that $\gamma+\dbar \beta$ is smooth. 
The claim follows.

Finally we show injectivity of $i$. Suppose that $\omega\in\Omega^{k+1}_{\overline p}$ is a smooth closed form 
such that
$\omega=d\gamma$ where $\gamma$ is smooth. By generalized Poincar\'e inequality there exists a closed form $\gamma_0$ such that 
$$\| \gamma-\gamma_0\|_{p_{k}}\lesssim \|\omega\|_{p_{k+1}}.$$
It follows that $\gamma-\gamma_0\in\Omega^{k}_{\overline p} $ and $ d(\gamma-\gamma_0)=\omega $.

\end{proof}

Note that the inclusion in Theorem \ref{inc_thm} is not surjective in general.
\begin{ex}
Let $X$ be an open punctured disk in $\R^2$ and $\overline p := (p,p,p)$. 
It is well known that $H^1_{DR}(X)$ is one dimensional and spanned by the angle form 
$\omega =\frac{xdy -y dx}{x^2+y^2}$ where $x,y$ are the standard coordinates in $\R^2$.
A straight forward computation shows that for $p\geq 2$ the form $\omega$ is not $L^p$ bounded
and therefore $H^1_{\overline p}(X)=0$. Indeed, suppose that $\alpha$ is a smooth $1$-form representing a non trivial element of $H^1_{\overline p}(X)$. 
Since the disk $X$ can be covered by three sectors each of which convex, 
it follows that $X$ satisfies Poincar\'e inequality. Therefore,
there exists a non zero number $a$ such that $\int_{c_r} \alpha = a$ for any circle $c_r$ of radius $r$ around the origin. It follows that 
$$|a| = | \int_{c_r} \alpha | \leq \int_{c_r} |\alpha| \leq (\int_{c_r}|\alpha|^p)^{1/p}(2\pi r)^{1/p'}$$
and hence
$$ \int_{c_r}|\alpha|^p \geq \frac{|a|^p}{(2\pi r)^{p-1}}. $$
Therefore, writing the norm of $\alpha$ in polar coordinate $(r,\theta)$ we obtain
$$\| \alpha \|^p_{L^p} = \int_0^1 dr \int_{c_r} |\alpha|^p d\theta \geq \frac{|a|^p}{{(2\pi)}^{p-1}}\int_0^1 \frac{dr}{r^{p-1}} = \infty ,$$
in contradiction with the fact that $\|\alpha \|_{L^p}<\infty$.

\end{ex}
As a consequence of Theorem \ref{inc_thm} we obtain the following
\begin{cor}
If $X$ is a compact manifold then $i:H_{\overline p}^\bullet(X)\to H_{DR}^\bullet(X)$
is an isomorphism
\end{cor}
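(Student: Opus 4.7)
The plan is to use Theorem \ref{inc_thm} to get injectivity of $i$ for free and then to show surjectivity directly from compactness of $X$. First I need to check that the setup of Theorem \ref{inc_thm} applies, i.e.\ that the generalized Poincar\'e inequality holds for $\Omega^\bullet_{\overline p}(X)$. This is an immediate consequence of Theorem \ref{global_Poincare}: if $\omega$ is a $k$-form with $d\omega\in L^{p_{k+1}}$, then $d\omega$ is an exact (and in particular closed) form on the compact manifold $X$, so Theorem \ref{global_Poincare} produces $\xi$ with $d\xi=d\omega$ and $\|\xi\|_{L^{p_k}}\lesssim \|d\omega\|_{L^{p_{k+1}}}$; setting $\omega_0:=\omega-\xi$ gives a closed $k$-form with $\|\omega-\omega_0\|_{p_k}\lesssim\|d\omega\|_{p_{k+1}}$, which is exactly the generalized Poincar\'e inequality in the required form.

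For surjectivity, let $[\alpha]\in H^k_{DR}(X)$ be represented by a smooth closed $k$-form $\alpha$. Because $X$ is compact, $\alpha$ is bounded pointwise, hence $\|\alpha\|_{L^{p_k}}<\infty$; likewise $d\alpha=0\in L^{p_{k+1}}$. Therefore $\alpha$ lies in $\Omega^k_{\overline p}(X)$ and is $\overline{d}$-closed there, so it determines a class $[\alpha]_{\overline p}\in H^k_{\overline p}(X)$. By the construction of $i$ given in the proof of Theorem \ref{inc_thm}, one may take $\psi=0$ (since $\alpha$ is already smooth), giving $i[\alpha]_{\overline p}=[\alpha]_{DR}$. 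This shows that $i$ hits every De Rham class.

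Combining surjectivity with the injectivity already established in Theorem \ref{inc_thm} yields that $i$ is an isomorphism. The only substantive point in this argument is the verification of the generalized Poincar\'e inequality for compact $X$, which is handed to us by the main global result of the paper; everything else is essentially a bookkeeping observation about what compactness buys us, namely that $L^p$ integrability of smooth forms is automatic.
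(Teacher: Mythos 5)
Your proposal is correct and follows essentially the same route as the paper: injectivity is inherited from Theorem \ref{inc_thm}, and surjectivity comes from the observation that a smooth form on a compact manifold is bounded, hence $L^{p}$-bounded, so every De Rham class already has a representative in $\Omega^\bullet_{\overline p}(X)$. Your extra verification that the generalized Poincar\'e inequality follows from Theorem \ref{global_Poincare} is a sensible check of the hypothesis of Theorem \ref{inc_thm}, but it does not change the argument.
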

\begin{proof}
By Theorem \ref{inc_thm} the map $i$ is injective so we only need to show that it is surjective.
But a smooth form on compact manifold is bounded and therefore $L^p$ bounded.
\end{proof}

\end{document}